\documentclass[12pt, reqno]{amsart}
\usepackage[english]{babel}
\usepackage{amsmath, amsthm, amscd, amsfonts, amssymb, graphicx, color,latexsym}
\usepackage[bookmarksnumbered, colorlinks, plainpages]{hyperref}
\usepackage{booktabs}
\usepackage[utf8]{inputenc}
\usepackage{amsthm}
\usepackage{amsmath}
\usepackage{pgf,tikz}
\usepackage{amsfonts,amssymb,enumerate}
\newtheorem{Def}{Definition}[subsection]
\newtheorem{theo}[Def]{Theorem}
\newtheorem{prop}[Def]{Proposition}
\newtheorem{lemma}[Def]{Lemma}

\newtheorem{introthm}{Theorem}

\newtheorem{introcor}[introthm]{Corollary}

\theoremstyle{definition}
\newtheorem{rk}{Remark}[subsection]

\newtheorem{quest}{Question}

\DeclareMathOperator\Span{Span}

\DeclareMathOperator\Ima{Im}
\DeclareMathOperator\End{End}
\DeclareMathOperator\Isom{Isom}
\newcommand{\R}{\mathbb{R}}

\newcommand{\C}{\mathbb{C}}

\newcommand{\Hq}{\mathbb{H}}
\newcommand{\p}{\mathbb{P}}
\newcommand{\Z}{\mathbb{Z}}

\newcommand{\F}{\mathcal{F}}
\newcommand{\g}{\mathfrak{g}}

\newcommand{\tor}{\mathfrak{t}}

\newcommand{\cH}{\mathcal{H}}
\newcommand{\cV}{\mathcal{V}}
\newcommand{\cD}{\mathcal{D}}

\newcommand{\su}{\mathfrak{su}}

\newcommand{\lied}{\mathcal{L}}

\everymath{\displaystyle}
\usepackage[backend=biber, style=alphabetic, sorting=nyt, maxalphanames=4]{biblatex}
\begin{document}
\usetikzlibrary{positioning}
\setcounter{page}{1}

\title[$SU(3)$-structures on quotients of $3$-Sasakian orbifolds]{$SU(3)$-structures on quotients of $3$-Sasakian orbifolds}

\author{Quentin Peres, Dimitrios Tsimpis}
\address{Quentin Peres \\Institut Camille Jordan, Université Claude Bernard Lyon 1 \\ 21 Av. Claude Bernard, 69100 Villeurbanne, France}
\email{peres@math.univ-lyon1.fr}
\address{Dimitrios Tsimpis\\Institut de Physique des Deux Infinis de Lyon, Universit\'e de Lyon, UCBL, UMR 5822, CNRS/IN2P3, 4 rue Enrico Fermi, 69622 Villeurbanne Cedex, France}
\email{d.tsimpis@ipnl.in2p3.fr}
\begin{abstract}
We show that if $(S,g,\xi_i,\eta_i,\Phi_i)$ is a quasi-regular $3$-Sasakian orbifold of dimension $7$, then its quotient $Z$ by one of the Reeb vector fields inherits an $SU(3)$-structure parametrized by two real parameters. We compute its torsion and show that it is an LT-structure. Moreover, for certain values of the parameters, we can provide a nearly Kähler structure on $Z$ appearing as a special case of our construction. We give several examples in the regular,  quasi-regular and orbifold cases. 
\end{abstract} \maketitle
\tableofcontents

\section{Introduction}
In string theory and supergravity,  $SU(3)$-structures on six-dimensional manifolds are of particular importance, notably in the context of flux compactifications, where the intrinsic torsion is closely related to the background fluxes and encodes relevant geometric and physical data \cite{Gra_a_2006}. Calabi–Yau threefolds correspond to the distinguished case of torsion-free $SU(3)$-structures, arising in the absence of fluxes in the simplest compactification settings.  \\ 
\indent An \textit{$SU(n)$-structure} on a smooth manifold $M$ of real dimension $2n$ is given by an almost complex structure $J$, a non-degenerate real $(1,1)$-form $\omega$ such that the symmetric tensor $g=\omega(\cdot,J\cdot)$ defines a Riemannian metric on $M$ and a complex $(n,0)$ volume form $\Omega$ which is locally decomposable. This can be summed up by the following algebraic equations:
$$\Omega\wedge\omega=0,$$ $$\Omega\wedge\overline{\Omega}=c_n\frac{\omega^n}{n!}$$ where $c_n=2^n(-i)^{n^2}$. The algebraic equations defining the $SU(n)$-structure on $M$ completely determine the almost complex structure $J$ and the Riemannian metric $g$. Alternatively, an $SU(n)$-structure on $M$ is an \textit{almost hermitian structure} $(J,\omega,g)$ such that the first Chern class of $M$ vanishes. Given an $SU(n)$-structure on $M$ determined by $\omega$ and $\Omega$, one can compute its \textit{torsion} by calculating the exterior derivatives of $\omega$ and $\Omega$:
$$d\omega=\frac{3}{2^{n-2}}i^{(n+1)^2}\Ima(\overline{W_1}\cdot\Omega)+W_4\wedge\omega+W_3,$$ $$d\Omega=W_1\wedge\omega^2+W_2\wedge\omega+\overline{W_5}\wedge\Omega,$$ where $W_1$ is an $(n-3,0)$-form, $W_2$ a primitive $(n-2,1)$-form, $W_3$ a real primitive $(2,1)+(1,2)$-form, $W_4$ a real $(1,0)+(0,1)$-form and $W_5$ is a $(1,0)$-form. The $W_i$ are called the \textit{torsion classes} and characterize a geometric property of the structure and were first described in \cite{Grayhervella}. In this paper, we shall be interested in a particular type of $SU(n)$-structure, called LT-structure, see Definition \ref{LT}.\\ 
\indent In \cite{Larfors_2010}, Larfors, Lüst and Tsimpis provide a method to construct $SU(3)$-structures on six-dimensional toric manifolds. This was then generalized in \cite{peres2026sunstructuresquotienttorusactions} for arbitrary dimensions. The method is based on reversing one direction of the almost complex structure by using a special $1$-form called a \textit{twist form}, see Definition \ref{twistdef}. A similar approach was used by Eells and Salamon in \cite{EellsSalamon} where they use the Levi-Civita connection to reverse one direction of the integrable complex structure of any twistor space over a four oriented Riemannian manifold.  Our goal is to apply these different results in the context of $3$-Sasakian manifolds,  which were first introduced in \cite{Kuo1970OnAC}, and also generalize them to orbifolds.
\subsection{$3$-Sasakian manifolds} A Riemannian manifold $(S,g)$ has a \textit{$3$-Sasakian structure} if its Riemannian cone $(C(S)=S\times\R_{>0},\bar g=dr^2+r^2g)$ is \textit{hyperkähler}. In particular, $S$ is of dimension $4n+3$ for some positive integer $n$ and has three extra-structures induced by the hyperkähler structure of the cone. More precisely, let $J_i$ for $i=1,2,3$ be the three integrable complex structures compatible with the metric $\bar g$ satisfying the quaternionic relations $$J_iJ_j=-\delta_{ij}+\epsilon_{ijk}J_k$$ where $\epsilon_{ijk}$ is the Levi-Civita symbol. These complex structures define three nowhere-vanishing vector fields $\xi_i=J_i(r\partial_r)$, called the Reeb vector fields that can be restricted to $S$ which is isometrically identified with the link $\{r=1\}$ in $C(S)$. The $1$-forms dual to the Reeb vector fields that we denote $\eta_i$ define three contact structures on $S$. If $\nabla$ denotes the Levi-Civita connection of $(S,g)$, then the tensor $\Phi_i=\nabla\xi_i$ define three CR-structures on $S$. Proposition \ref{trisasakien} shows that the three Reeb vector fields satistfy the $\su(2)$ Lie algebra relations. With this observation, the quotient $B$ of $S$ by the foliation generated by the three Reeb vector fields provide a \textit{quaternionic Kähler orbifold} structure in the quasi-regular case. In addition, one can also consider the quotient of $S$ by the foliation generated by one of the Reeb vector fields, this quotient $Z$ will be called the \textit{twistor space} of $S$ and has an orbifold structure in the quasi-regular case. Several properties of these different quotients will be studied in Section \ref{3.2} and summed up in the following diagram:
\[
\begin{tikzpicture}[>=stealth]

\node (S) at (0,2) {$S$};
\node (Z) at (-3,0) {$Z$};
\node (B) at (3,0) {$B$};

\draw[->] (S) to[bend right=15] node[left] {$S^1$} (Z);
\draw[->] (S) to[bend left=15] node[right] {$SU(2)\ \text{or}\ SO(3)$} (B);

\draw[->] (Z) -- node[above] {$\mathbb{CP}^1$} (B);

\node[below=3mm of Z] {\small Twistor space};
\node[below=3mm of B, align=center] {\small quaternionic\\K\"ahler};

\end{tikzpicture}
\]
\subsection{Main results} The framework of $3$-Sasakian manifolds provides a natural application of the results of \cite{peres2026sunstructuresquotienttorusactions}. More precisely, let $(S,g,\xi_i,\eta_i,\Phi_i)$ be a quasi-regular or regular $3$-Sasakian manifold of dimension $7$ and consider its hyperkähler cone $(C(S),\bar g, J_i)$. The cone being hyperkähler implies that it also has an explicit Calabi-Yau $4$-fold structure. Indeed, let $\omega_i$ be the Kähler forms associated to $\bar g$ and $J_i$ on $C(S)$. Fixing the Kähler structure $(\omega_3,J_3)$, the form $\Omega_{(4)}=\frac{1}{2}(\omega_1+i\omega_2)^2$ is a holomorphic $(4,0)$-form. If $S$ is further assumed to be compact, the Reeb vector field $\xi_3$ generates a locally free $S^1$ action on $C(S)$ which is hamiltonian. Using the other two Sasakian structures on $S$, we construct a twist form, see Definition \ref{twistdef}, and prove the following result:
\begin{introthm}\label{theo1}
    Let $(S,g,\xi_i,\eta_i,\Phi_i)$ be a (quasi)-regular $7$ dimensional $3$-Sasakian manifold and $Z$ the quotient of $S$ by the foliation generated by $\xi_3$. Denote $(C(S),\bar g,J_i)$ the hyperkähler cone of $S$ and the associated Kähler forms $\omega_i:=\bar g(J_i\cdot,\cdot)$. The holomorphic volume form $2\Omega_{(4)}:=(\omega_1+i\omega_2)^2$ is of charge $4i$ and $\eta:=r^2(\eta_1+i\eta_2)$ is a twist form of charge $2i$. Then, the following forms: $$\omega:=\omega_3-\frac{i}{|\eta|^2}\eta\wedge\bar\eta,\,\, \Omega:=-i\bar\eta\wedge(\bar\eta\cdot(\iota_{\xi_3}\Omega_{(4)}))$$ produce an LT structure on $Z$. Moreover, the non-vanishing torsion classes are
    $$W_1=\frac{8}{3},\, W_2=\frac{W_1}{2}(\omega+\frac{3i}{2}\eta\wedge\bar\eta).$$
\end{introthm}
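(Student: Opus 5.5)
The plan is to obtain Theorem \ref{theo1} as a special case of the general quotient construction of \cite{peres2026sunstructuresquotienttorusactions}, recalled in Definition \ref{twistdef} and the associated theorem. That theorem says: given a Calabi--Yau manifold with a Hamiltonian (locally free) circle action, a holomorphic volume form of some charge, and a twist form of compatible charge, the forms $\omega$ and $\Omega$ written in the statement descend to an $SU(n-1)$-structure on the reduced space.

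\textbf{Step 1: hypotheses of the general theorem.} Everything is on the cone $(C(S),\bar g,J_i)$, fixing the Kähler structure $(\omega_3,J_3)$.
\begin{itemize}
\item \emph{Charges.} From the $\su(2)$ relations of Proposition \ref{trisasakien} (namely $[\xi_3,\xi_1]=2\xi_2$ up to the paper's sign convention), I get $\lied_{\xi_3}\eta_1=\pm2\eta_2$ and $\lied_{\xi_3}\eta_2=\mp2\eta_1$, and similarly for $\omega_1,\omega_2$. Hence $\omega_1+i\omega_2$ has charge $2i$, its square $2\Omega_{(4)}$ has charge $4i$, and $\eta=r^2(\eta_1+i\eta_2)$ has charge $2i$ because $\lied_{\xi_3}r=0$.
\item \emph{Horizontality.} $\iota_{\xi_3}\eta=0$, since $\eta_j(\xi_3)=\delta_{j3}$.
\item \emph{Type.} $\eta$ is of type $(0,1)$ or $(1,0)$ for $J_3$ as Definition \ref{twistdef} requires. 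This follows from $J_3\xi_1=\pm\xi_2$ on the cone.
\item \emph{Moment map.} The action of $\xi_3$ is Hamiltonian with moment map $\mu=\tfrac12r^2$, since $\iota_{\xi_3}\omega_3=\pm r\,dr$. The reduced space $\mu^{-1}(c)/S^1$ is therefore $S/\langle\xi_3\rangle=Z$.
\end{itemize}
The charge matching $4i=2\cdot 2i$ is exactly what makes $\bar\eta\wedge(\bar\eta\cdot\iota_{\xi_3}\Omega_{(4)})$ invariant and basic. The general theorem then yields the $SU(3)$-structure $(\omega,\Omega)$ on $Z$, with the orbifold case handled locally in uniformizing charts.

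\textbf{Step 2: explicit forms on $S$.} I restrict to $r=1$ and split $TS=\Span\{\xi_1,\xi_2,\xi_3\}\oplus\cH$. Write $\omega_i^{\cH}$ for the horizontal parts of $\tfrac12 d\eta_i$, so that the 3-Sasakian structure equations read $d\eta_i=2\omega_i^{\cH}-2\epsilon_{ijk}\eta_j\wedge\eta_k$, up to normalization. Then:
\begin{itemize}
\item $\omega$ becomes $\omega_3^{\cH}$ plus a multiple of $\eta_1\wedge\eta_2$, i.e. the reversed direction.
\item $\Omega$ becomes $(\eta_1-i\eta_2)\wedge(\omega_1^{\cH}-i\omega_2^{\cH})$, up to a constant. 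The key point is that $\iota_{\xi_3}(\omega_1+i\omega_2)^2$ on $C(S)$ reduces to $(\eta_1+i\eta_2)\wedge(\omega_1^{\cH}+i\omega_2^{\cH})$, and contracting with $\bar\eta$ kills the $\eta$ factor.
\end{itemize}
I would check the algebraic normalization $\Omega\wedge\bar\Omega=c_3\omega^3/3!$ directly as a consistency check. I would also keep track of the fact that these forms are basic for $\xi_3$ only in combination: $\eta_1\pm i\eta_2$ has charge $\pm2i$, and only invariant products descend.

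\textbf{Step 3: torsion.} I differentiate using the structure equations, together with $d\omega_i^{\cH}=$ (terms in $\eta_j\wedge\omega_k^{\cH}$), which follows from $d^2\eta_i=0$. I then decompose the result by $SU(3)$ type. I expect:
\begin{itemize}
\item $d\omega$ to be a combination of $\eta_{1,2}\wedge\omega_{1,2}^{\cH}$, which are exactly the components of $\Re\Omega$ and $\Im\Omega$. This would give $W_3=W_4=0$ and identify $W_1$ as a real constant.
\item $d\Omega$ to equal $W_1\omega^2+W_2\wedge\omega$ with no $\bar W_5\wedge\Omega$ term. The constant $W_1$ must match its value from $d\omega$.
\item $W_2$ to be proportional to $\omega_3^{\cH}-2(\text{multiple of }\eta_1\wedge\eta_2)$, which is the stated $\tfrac{W_1}{2}(\omega+\tfrac{3i}{2}\eta\wedge\bar\eta)$. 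It is primitive, since on the $4$-dimensional horizontal part and the $2$-dimensional part the traces cancel: $2-2=0$.
\end{itemize}
This gives the LT condition of Definition \ref{LT}, namely $W_3=W_4=W_5=0$ together with the prescribed form of $W_2$.

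\textbf{Main obstacle.} I expect the hardest part to be the normalization bookkeeping: the factors of $2$ in $d\eta_i$, the sign conventions in $\epsilon_{ijk}$ and $J_3\xi_1=\pm\xi_2$, and the $r^2$ factors. These must come out so that $W_1=\tfrac83$ exactly and the two computations of $W_1$, from $d\omega$ and from $d\Omega$, agree. I would first fix conventions by checking on the regular example $S=S^7$, $Z=\mathbb{CP}^3$, and only then run the general computation.
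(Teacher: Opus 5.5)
Your proposal is correct. Step 1 is the paper's own check of the hypotheses of Theorem \ref{twisttheorem}. The difference is in how the torsion is computed.

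\textbf{Comparison of routes.} The paper's proof of this theorem stays on the cone. It uses $d\eta=2(\omega_1+i\omega_2)$, $(d\eta)^2=8\Omega_{(4)}$ and the contraction identity $r^2d\eta+2r^2\eta\wedge(d\log r+i\eta_3)=-2i\bar\eta\cdot\iota_{\xi_3}\Omega_{(4)}$. With these it writes $d\omega=2\Ima(d\eta\wedge\bar\eta)=4\Ima\Omega$, and $d\Omega$, directly in terms of the twist data. You instead restrict to $r=1$ and work with the horizontal forms $\alpha_i$ (your $\omega_i^{\cH}$) and the structure equations. That is exactly the method of the paper's proof of Theorem \ref{theo2}, specialised to $a=b=1$, $\gamma=0$. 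Your route is more explicit: it makes basicness, the $SU(3)$ normalization and the primitivity of $W_2$ transparent, and it extends at once to the family $(\omega_{a,b},\Omega_c)$. The paper's route ties the torsion directly to the ingredients of the twist theorem, namely the twist form and $\Omega_{(4)}$.

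Your plan does go through. On $S$ you get $\omega=\alpha_3-\eta_1\wedge\eta_2$ and $\Omega=\alpha\wedge\bar\eta$ with $\alpha=\alpha_1+i\alpha_2$. Then $d\omega=-4(\alpha_1\wedge\eta_2-\alpha_2\wedge\eta_1)=4\Ima\Omega$, so $W_1=\frac83$. Also $d\Omega=8\mu_B-4\alpha_3\wedge\eta_1\wedge\eta_2$ and $\omega^2=2\mu_B-2\alpha_3\wedge\eta_1\wedge\eta_2$. Hence $W_2=\frac43\alpha_3+\frac83\eta_1\wedge\eta_2$, which is the stated $\frac{W_1}{2}(\omega+\frac{3i}{2}\eta\wedge\bar\eta)$.

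\textbf{Points to fix when writing it out.}

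First, your displayed $\Omega$ on $S$ has the wrong conjugation. The form $(\eta_1-i\eta_2)\wedge(\omega_1^{\cH}-i\omega_2^{\cH})$ has charge $-4i$ and is not basic. Your own ``key point'' sentence gives the correct form, $(\eta_1-i\eta_2)\wedge(\omega_1^{\cH}+i\omega_2^{\cH})$.

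Second, the sign of the vertical term is not a free normalization, and no $S^7$ check is needed to fix it. Proposition \ref{trisasakien} gives $[\xi_1,\xi_2]=-2\xi_3$. This forces $d\eta_i=2\alpha_i+\epsilon_{ijk}\eta_j\wedge\eta_k$ (summed), so $d\eta_3=2\alpha_3+2\eta_1\wedge\eta_2$ and $\omega_3|_S=\alpha_3+\eta_1\wedge\eta_2$. Only with this sign does subtracting $i\eta\wedge\bar\eta=2\eta_1\wedge\eta_2$ actually reverse the fibre direction.

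Third, the span of $\{\alpha_i\wedge\eta_j\}_{i,j\le 2}$ is four-dimensional, not two. Besides $\text{Re}\,\Omega$ and $\Ima\Omega$ it contains $\text{Re}(\alpha\wedge\eta)$ and $\Ima(\alpha\wedge\eta)$. Pointwise these are primitive of type $(2,1)+(1,2)$, so knowing only that $d\omega$ lies in this span would not give $W_3=0$. They are excluded by the explicit computation above. Alternatively, $d\omega$ is $\xi_3$-invariant while $\alpha\wedge\eta$ has charge $4i$.
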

Alternatively, one can also work directly on the horizontal distribution generated by $\xi_3$ to produce a parametric family of $SU(3)$-structures on the quotient $Z$.
\begin{introthm}\label{theo2}
    Let $(S,g,\xi_i,\eta_i,\Phi_i)$ be a (quasi)-regular $7$ dimensional $3$-Sasakian orbifold. Denote $Z$ the quotient of $S$ by the foliation generated by $\xi_3$. Consider the $2$-forms $\alpha_i$ defined by $$2\alpha_i:=d\eta_i-\epsilon_{ijk}\eta_j\wedge\eta_k.$$ For $a,b\in\R$ and $c\in\C$, we introduce $$\omega_{a,b}:=a\alpha_3-\frac{i}{2}b^2\eta\wedge\bar\eta,\, \Omega_c:=c\alpha\wedge\bar\eta.$$ These two forms are basic with respect to $\xi_3$ and define an $SU(3)$-structure on $Z$ iff $$a>0,\,\, c\neq 0,\,\, b^2= \frac{|c|^2}{a^2}.$$ In that case, the $SU(3)$-structure is LT and the non-vanishing torsion classes are given by $$W_1=\frac{4e^{i\gamma}(a+b^2)}{3ab},\, W_2=\frac{4e^{i\gamma}(2b^2-a)}{3ab}(\omega_{a,b}+\frac{3ib^2}{2}\eta\wedge\bar\eta),$$ where $c=abe^{i\gamma}.$
\end{introthm}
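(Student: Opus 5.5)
The plan is to reduce everything to algebra on the horizontal distribution $\cH=\ker\eta_1\cap\ker\eta_2\cap\ker\eta_3$ using the structure equations of the $3$-Sasakian structure. Throughout, I take $\eta:=\eta_1+i\eta_2$ and $\alpha:=\alpha_1+i\alpha_2$ on $S$. The definition $2\alpha_i=d\eta_i-\epsilon_{ijk}\eta_j\wedge\eta_k$ gives $d\eta_i=2\alpha_i+\epsilon_{ijk}\eta_j\wedge\eta_k$. I will use the facts from the section on $3$-Sasakian manifolds (Proposition \ref{trisasakien}) that the Reeb fields satisfy the $\su(2)$ relations, that $\iota_{\xi_j}\alpha_i=0$, and that the $\alpha_i$ restrict on the $4$-dimensional $\cH$ to an orthogonal triple of (anti-)self-dual forms, $\alpha_i\wedge\alpha_j=2\delta_{ij}\,\mathrm{vol}_\cH$ up to the paper's normalization.

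\textbf{Step 1: structure equations.} Applying $d^2\eta_i=0$ to $d\eta_i=2\alpha_i+\epsilon_{ijk}\eta_j\wedge\eta_k$ gives $d\alpha_i=\epsilon_{ijk}\,\alpha_j\wedge\eta_k$ (up to sign). In complex notation this yields $d\alpha_3=\tfrac{i}{2}(\alpha\wedge\bar\eta-\bar\alpha\wedge\eta)$ and an expression $d\alpha=\pm i(\alpha_3\wedge\eta-\alpha\wedge\eta_3)$. It also gives $d\eta=2\alpha-2i\eta_3\wedge\eta$ (up to sign).

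\textbf{Step 2: basicness.} Cartan's formula together with $\iota_{\xi_3}\eta=0$ shows that $\lied_{\xi_3}\eta=2i\eta$ and $\lied_{\xi_3}\alpha=2i\alpha$, i.e. both have charge $2i$, while $\alpha_3$ and $\eta\wedge\bar\eta$ are invariant. Hence $\alpha_3$, $\eta\wedge\bar\eta$ and $\alpha\wedge\bar\eta$ are all annihilated by $\iota_{\xi_3}$ and by $\lied_{\xi_3}$. They are therefore basic and descend to the orbifold $Z$; in the quasi-regular case this is done chart by chart using invariance under the local uniformizing groups.

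\textbf{Step 3: the $SU(3)$ conditions.} Since $\alpha\wedge\alpha=\alpha_1^2-\alpha_2^2+2i\alpha_1\wedge\alpha_2=0$, the form $\alpha$ is decomposable on $\cH$, say $\alpha=\theta_1\wedge\theta_2$. Then $\Omega_c=c\,\theta_1\wedge\theta_2\wedge\bar\eta$ is locally decomposable and nonzero iff $c\neq0$. Next, $\Omega_c\wedge\omega_{a,b}=ac\,\alpha\wedge\alpha_3\wedge\bar\eta=0$ by orthogonality, and the $\eta\wedge\bar\eta$ term dies against $\bar\eta$. For the normalization, $\alpha\wedge\bar\alpha=\alpha_1^2+\alpha_2^2=2\alpha_3^2$, so $\Omega_c\wedge\overline{\Omega_c}$ is a multiple of $|c|^2\alpha_3^2\wedge\eta\wedge\bar\eta$. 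On the other side, $\omega_{a,b}^3/3!$ is a multiple of $a^2b^2\alpha_3^2\wedge\eta\wedge\bar\eta$. Comparing these with $c_3=8i$ forces $b^2=|c|^2/a^2$. Positivity of $g=\omega(\cdot,J\cdot)$, with $J$ the almost complex structure whose $(1,0)$-forms are $\theta_1,\theta_2,\bar\eta$, forces $a>0$ on $\cH$ (the orientation of $\alpha_3$ relative to $\alpha$) and $b^2>0$ in the $\eta$-direction. I expect the sign bookkeeping here to be the main obstacle: $\bar\eta$ rather than $\eta$ is of type $(1,0)$, so $-\tfrac{i}{2}b^2\eta\wedge\bar\eta$ must come out positive. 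I would check this first on the round $S^7$, where $Z=\mathbb{CP}^3$, to fix the conventions.

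\textbf{Step 4: torsion.} Using Step 1, $d\omega_{a,b}=a\,d\alpha_3-\tfrac{i}{2}b^2 d(\eta\wedge\bar\eta)$. The $\eta_3$-terms cancel, which they must by basicness, and what remains is a combination of $\alpha\wedge\bar\eta$ and $\bar\alpha\wedge\eta$, i.e. of $\mathrm{Re}\,\Omega_c$ and $\mathrm{Im}\,\Omega_c$. Hence $W_3=W_4=0$, and $W_1$ is read off from the coefficient. Similarly, $d\Omega_c=c(d\alpha\wedge\bar\eta+\alpha\wedge d\bar\eta)$ reduces to a combination of $\alpha_3\wedge\eta\wedge\bar\eta$ and $\alpha\wedge\bar\alpha$. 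This combination is a real $(2,2)$-form, so $W_5=0$. I then decompose it as $W_1\omega^2+W_2\wedge\omega$ with $W_2$ primitive; primitivity is checked by $W_2\wedge\omega^2=0$, and this pins down the ansatz $W_2\propto\omega_{a,b}+\tfrac{3ib^2}{2}\eta\wedge\bar\eta$. Writing $c=abe^{i\gamma}$ and matching the $W_1$ coming from $d\omega$ with the one coming from $d\Omega$ gives $W_1=\tfrac{4e^{i\gamma}(a+b^2)}{3ab}$ and $W_2=\tfrac{4e^{i\gamma}(2b^2-a)}{3ab}(\omega_{a,b}+\tfrac{3ib^2}{2}\eta\wedge\bar\eta)$. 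Since only $W_1$ and $W_2$ survive, the structure is LT by Definition \ref{LT}.
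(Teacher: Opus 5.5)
Your route is the paper's: structure equations obtained from the definition of the $\alpha_i$, basicness from the $\xi_3$-charges of $\eta$ and $\alpha$, the algebraic identities $\omega_{a,b}\wedge\Omega_c=0$, $\Omega_c\wedge\overline{\Omega_c}=8i|c|^2\mu_Z$, $\omega_{a,b}^3=-6a^2b^2\mu_Z$ plus positivity in a local coframe, and then reading off $W_1,W_2$ from $d\omega_{a,b}$ and $d\Omega_c$. The plan is sound, but the explicit constants you wrote down are wrong, and they matter for the answer.

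\textbf{Structure equations.} Carrying out $d^2\eta_i=0$ actually gives $d\alpha_i=-2\epsilon_{ijk}\,\alpha_j\wedge\eta_k$. In complex notation this reads
\[
d\alpha_3=-i(\alpha\wedge\bar\eta-\bar\alpha\wedge\eta)=2\Ima(\alpha\wedge\bar\eta),\qquad d\alpha=2i(\alpha\wedge\eta_3-\alpha_3\wedge\eta),\qquad d\eta=2\alpha-2i\eta\wedge\eta_3 .
\]
Your $d\alpha_3$ is off by a factor $-2$. With it you would get $d\omega_{a,b}=(2b^2-a)\Ima(\alpha\wedge\bar\eta)$ instead of $2(a+b^2)\Ima(\alpha\wedge\bar\eta)$. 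That $W_1$ would then be inconsistent with the one extracted from $d\Omega_c$.

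Your $d\alpha=\pm i(\alpha_3\wedge\eta-\alpha\wedge\eta_3)$ is also missing the factor $2$. With it, the $\eta_3$-terms in $d\alpha\wedge\bar\eta+\alpha\wedge d\bar\eta$ would not cancel, so $d\Omega_c$ would not even be basic. With the correct equations, $d(\eta\wedge\bar\eta)=4i\Ima(\alpha\wedge\bar\eta)$ and $d\Omega_c=c(8\mu_B-2i\alpha_3\wedge\eta\wedge\bar\eta)$. The stated $W_1$ and $W_2$ then follow by the matching you describe.

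\textbf{Normalization constant.} $c_3=2^3(-i)^9=-8i$, not $8i$. This sign is not a matter of orientation, since the ratio $\Omega_c\wedge\overline{\Omega_c}/\omega_{a,b}^3$ is orientation-independent. Only with $-8i$ does the normalization yield $|c|^2=a^2b^2$; with $+8i$ it would force $|c|^2=-a^2b^2$, which has no nondegenerate solution.
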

\begin{introcor}\label{cor1}
Using the parameters $a$ and $b$, we can obtain two subclasses of structures summarized in the following table:
\\ \\
\centering
\renewcommand{\arraystretch}{1.6}
\begin{tabular}{lll}
\toprule
\textbf{Torsion classes} &
\textbf{Type of structure} &
\textbf{Values of $a,b,\gamma$} \\
\midrule

$\begin{aligned}
&W_1\neq0,\\
&W_2=W_3=W_4=W_5=0
\end{aligned}$
&
\begin{tabular}[c]{@{}c@{}}
Nearly Kähler\\
$SU(3)$
\end{tabular}
&
$\begin{aligned}
&a>0,\qquad a=2b^2,\\
&\gamma\in\mathbb{R}
\end{aligned}$

\\[1ex]
\midrule
$\begin{aligned}
&W_2\neq0,\\
&W_1=W_3=W_4=W_5=0
\end{aligned}$
&
\begin{tabular}[c]{@{}c@{}}
Symplectic\\
$SU(1,2)$
\end{tabular}
&
$\begin{aligned}
&a=-b^2,\qquad a\neq0,\\
&\gamma\in\mathbb{R}
\end{aligned}$

\\

\bottomrule
\end{tabular}
\end{introcor}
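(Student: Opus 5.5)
Corollary \ref{cor1} is a direct specialization of Theorem \ref{theo2}; I will simply read off the torsion classes as functions of $(a,b,\gamma)$. By Theorem \ref{theo2}, whenever $a>0$, $c\neq 0$ and $b^2=|c|^2/a^2$, the pair $(\omega_{a,b},\Omega_c)$ is an LT $SU(3)$-structure. Its only possibly non-zero torsion classes are $W_1=\frac{4e^{i\gamma}(a+b^2)}{3ab}$ and $W_2=\frac{4e^{i\gamma}(2b^2-a)}{3ab}\,(\omega_{a,b}+\frac{3ib^2}{2}\eta\wedge\bar\eta)$. In particular $W_3=W_4=W_5=0$ for every admissible choice of parameters.

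\textbf{Nearly Kähler row.} Set $a=2b^2$ with $a>0$. Then $b\neq 0$ and $c=abe^{i\gamma}\neq 0$, with $\gamma$ arbitrary, and $b^2=|c|^2/a^2$ holds automatically. The prefactor $2b^2-a$ of $W_2$ vanishes, so $W_2=0$. Meanwhile $W_1=\frac{4e^{i\gamma}\cdot 3b^2}{3\cdot 2b^3}=\frac{2e^{i\gamma}}{b}\neq 0$. This is exactly the nearly Kähler torsion pattern.

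\textbf{Symplectic row.} Set $a=-b^2$. Then $a<0$, so the hypothesis $a>0$ of Theorem \ref{theo2} fails, and the metric $g=\omega_{a,b}(\cdot,J\cdot)$ is no longer positive definite. This is the main point to address. I would argue as follows.

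First, the algebraic identities $\Omega\wedge\omega=0$ and $\Omega\wedge\bar\Omega=c_3\,\omega^3/3!$ are polynomial in $(a,b,c)$. In the proof of Theorem \ref{theo2} the condition $a>0$ enters only through the positivity of $g$. For $a<0$ the pair $(\omega_{a,b},\Omega_c)$ therefore still satisfies the defining equations, but with respect to an indefinite metric.

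Second, I would determine the signature. On the horizontal part (the $\alpha$-directions) the metric scales like $a$, while on the $\eta$-direction it scales like $b^2$. For $a<0<b^2$ this gives signature $(4,2)$, i.e.\ an $SU(1,2)$-structure.

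Third, the exterior derivatives $d\omega_{a,b}$ and $d\Omega_c$ are computed by the same formulas used in the proof of Theorem \ref{theo2}. Those formulas do not depend on the sign of $a$, so the torsion formulas carry over verbatim; one only has to check that the type decomposition used there is purely algebraic in $J$. With $a=-b^2$ we get $a+b^2=0$, hence $W_1=0$. We also get $W_2=\frac{4e^{i\gamma}\cdot 3b^2}{3ab}(\ldots)\neq 0$, since $a,b\neq 0$ and the form $\omega_{a,b}+\frac{3ib^2}{2}\eta\wedge\bar\eta$ is non-zero.

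Finally, $d\omega=\frac{3}{2}i^{16}\Ima(\overline{W_1}\cdot\Omega)+W_4\wedge\omega+W_3=0$, so the structure is symplectic. This completes the table.
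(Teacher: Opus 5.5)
Your proposal is correct and follows the paper's own route. The paper likewise reads the corollary directly off the formulas for $W_1$ and $W_2$ in Theorem~\ref{theo2}, and treats $a<0$ through its remark: the algebraic identities and the expressions for $d\omega_{a,b}$, $d\Omega_{a,b,\gamma}$ do not depend on the sign of $a$, while the metric $a(e_1^2+\dots+e_4^2)+b^2(\eta_1^2+\eta_2^2)$ becomes indefinite and gives an $SU(1,2)$-structure. Your signature ``$(4,2)$'' is the paper's ``$(2,4)$'' written with the opposite ordering convention, since both mean four negative and two positive directions.
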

\subsection{Examples} In the regular case, it is known that there are two compact simply connected regular $3$-Sasakian manifolds of dimension $7$ which are the $7$ dimensional sphere and $SU(3)/U(1)$ \cite{Friedrich:1990zg}. With our results, we recover the examples of LT structures on $\C\p^3$ and on the flag manifold $SU(3)/T^2$ known in the literature \cite{Tomasiello_2008,Koerber_2008}. Moreover, we relate our construction to a construction of Eells and Salamon in \cite{EellsSalamon}, where they use the Levi-Civita connection to reverse the direction of the fiber of the standard complex structure on the twistor space of any oriented $4$ dimensional Riemannian manifold. \\
\indent In the quasi-regular case, there are many more examples. Here, we shall be interested in two families of examples. The first one from \cite{Boyer:1998sf} is constructed by a $3$-Sasakian moment map. More precisely, we start with the usual quaternionic vector space $\Hq^{n+1}$ and we consider weighted actions of compact tori $T^{n-1}\subset Sp(n+1)$. For some weights, the $3$-Sasakian quotients will inherit a smooth $3$-Sasakian structure with the property that these quotients are topologically different providing an infinite family of quasi-regular examples where Theorem \ref{theo1} and \ref{theo2} can be applied. One can also construct examples in the other way around by starting with a quaternionic Kähler orbifold and using Theorem $2.3.5$ in \cite{Boyer:1998sf} to get a $3$-Sasakian orbifold of dimension $7$. We apply this result in the case of certain weighted projective spaces obtaining once again a family of topologically different LT-structures with singularities. \\ \\
\indent The paper is organized as follows. In Section \ref{2}, we review the basics of $SU(n)$-structures and the twist construction. In Section \ref{3}, we start with some preliminaries on hyperkähler geometry to define $3$-Sasakian manifolds from the point of view of the cone. We then focus on the $7$-dimensional case in Section \ref{3.2}. Sections \ref{3.4} and \ref{3.5} are devoted to the proofs of our main results, while Section \ref{3.6} discusses some physics considerations. Section \ref{4} presents examples illustrating our construction in the regular,  quasi-regular and orbifold cases\\ \\
\textbf{Acknowledgments.} We would like to thank Eveline Legendre for valuable discussions and helpful comments on an earlier version of this paper.~Q.P.~is partially supported by the grant BRIDGES ANR-FAPESP ANR-21-CE40- 0017. D.T.~is partially supported by the French National Research Agency grant ANR-25-CE57-1445-01.
\section{Preliminaries}\label{2}
\subsection{$SU(n)$-structures}\label{2.1}
We start this section by recalling the basic facts about $SU(n)$-structures. 
\begin{Def}
    An $SU(n)$-structure on a smooth manifold $M$ of dimension $2n$ is an almost hermitian structure $(g,J,\omega)$ on $M$ with a normalized complex volume form $\Omega$, that is a nowhere vanishing section of $\Lambda^{n,0}T^*M$, the space of $(n,0)$-forms on $M$, such that 
    $$\omega\wedge\Omega=0,$$ $$\Omega\wedge\overline\Omega=c_n\frac{\omega^n}{n!},$$ where $c_n=2^n(-i)^{n^2}.$
\end{Def}
Recall that an \textit{almost hermitian structure} $(g,J,\omega)$ on $M$ consists of a Riemannian metric $g$, an almost complex structure $J$, i.e $J\in\Gamma(\End TM), J^2=-1$ and a real $2$-form $\omega$ which is \textit{$J$-compatible}, i.e,  $\omega(J\cdot,J\cdot)=\omega$ and $g=\omega(\cdot,J\cdot).$ \\
\indent An almost complex structure $J$ on $M$ gives rise to a decomposition of complex forms into components of type $(p,q)$ as follows. $J$ splits the complexified tangent bundle $TM^\C=TM\otimes\C$ as $TM^\C=T^{1,0}M\oplus T^{0,1}M$ where $T^{1,0}M=\ker(J-iId)$ is called the holomorphic tangent bundle and $T^{0,1}M=\overline{T^{1,0}M}$ is the anti-holomorphic tangent bundle. Taking the dual of these bundles gives rise to the decomposition of the space of complex $1$-forms into $(1,0)$-forms and $(0,1)$-forms that we respectively denote $\Lambda^{1,0}T^*M$ and $\Lambda^{0,1}T^*M$. Then, a $(p,q)$-form is a section of the bundle $\Lambda^{p,q}T^*M$ defined by $$\Lambda^{p,q}T^*M:=\underbrace{\Lambda^{1,0}T^*M\wedge\cdots\wedge\Lambda^{1,0}T^*M}_{\text{$p$-times}}\wedge\underbrace{\Lambda^{0,1}T^*M\wedge\cdots\wedge\Lambda^{0,1}T^*M}_{\text{$q$-times}}.$$
\indent The action of the exterior derivative $d$ on $(p,q)$-forms is 
$$d(\Lambda^{p,q}T^*M)=\Lambda^{p+2,q-1}T^*M\oplus\Lambda^{p+1,q}T^*M\oplus\Lambda^{p,q+1}T^*M\oplus\Lambda^{p-1,q+2}T^*M.$$ \indent Using this decomposition, one can define the \textit{intrinsic torsion} of an $SU(n)$-structure $(M,g,J,\omega,\Omega)$ 
\begin{Def}\cite{Grayhervella,chiossi2002intrinsictorsionsu3g2}
    Let $(M,g,J,\omega,\Omega)$ be an $SU(n)$-structure on a manifold $M$. The intrinsic torsion of the $SU(n)$-structure is
    $$d\omega=\frac{3}{2^{n-2}}i^{(n+1)^2}\Ima(\overline{W_1}\cdot\Omega)+W_4\wedge\omega+W_3,$$ $$d\Omega=W_1\wedge\omega^2+W_2\wedge\omega+\overline W_5\wedge\Omega,$$ where $W_1$ is a $(n-3,0)$ form, $W_2$ a primitive $(n-2,1)$-form, $W_3$ a real primitive $(2,1)+(1,2)$-form, $W_4$ is a real $(1,0)+(0,1)$-form and $W_5$ is a   $(1,0)$-form.
\end{Def}
    Recall that a $k$-form $\beta$ is \textit{primitive} if $\beta\wedge\omega^{n+1-k}=0$ and that the notation $W_1\cdot$ is the contraction of forms induced by the metric. The $W_i$ are known in the literature as the \textit{torsion classes} and measure the lack of integrability of the $SU(n)$-structures. Indeed, one can see that the almost complex structure is for instance integrable iff $W_1=W_2=0$. The case where all the $W_i$ are zero, called the \textit{torsion free} case, corresponds to a Calabi-Yau structure on $M$.
\subsection{A twist construction}\label{2.2}
In this paper, we are interested in a special type of $SU(n)$-structures, called the LT structures, motivated by their physical relevance for $\mathcal{N}=1$ compactifications for $IIA$ supergravity to $AdS_4$, see \cite{Lust_2005}. 
\begin{Def}\label{LT}
    An LT structure on $M$ is an $SU(n)$-structure with $W_3=W_4=W_5=0$ with $W_1$ and $W_2$ both non-zero.
\end{Def}
In the toric setting in dimension $6$, the construction of such structures was first considered in \cite{Larfors_2010} and then generalized to any arbitrary dimension in \cite{peres2026sunstructuresquotienttorusactions}. The construction is based on the following two definitions:
\begin{Def}\label{chargedef}
    Let $M$ be a smooth manifold with a real torus $T$ acting on it with Lie algebra $\tor$. A form $\alpha$ on $M$ has charge $k\in\tor^*$ if for every $\zeta\in\tor$, $$\lied_{X_\zeta}\alpha=k(\zeta)\alpha,$$ where $X_\zeta$ is the vector field induced by $\zeta$ on $M$.
\end{Def}
\begin{Def}\label{twistdef}
    Let $(M,J)$ be an almost complex manifold with a real torus $T$ acting on it with Lie algebra $\tor$. A $1$-form $\alpha$ on $M$ is a twist form if:
    \begin{enumerate}[i)]
        \item $\alpha$ is nowhere-vanishing and of type $(1,0)$.
        \item $\alpha$ is horizontal, i.e for every $\zeta\in\tor, \alpha(X_\zeta)=0$. 
        \item $\alpha$ has a charge $k_\alpha\in\tor^*$.
    \end{enumerate}
\end{Def}
\begin{theo}\cite{peres2026sunstructuresquotienttorusactions}\label{twisttheorem}
    Let $(M^{2n+2s},g,J,\omega_0,\Omega_0)$ be a Kähler manifold of real dimension $2n+2s$ with an $SU(n+s)$-structure $(\omega_0,\Omega_0)$. Suppose that a compact torus $T^s$ of dimension $s$ acts on $M$ holomorphically and in a hamiltonian way with moment map $\mu$. Consider $\tor:=Lie(T^s)$ the Lie algebra of $T^s$ and choose a basis $e_1,\cdots,e_s$ of $\tor$ and denote $V_i$ the vector field generated by $e_i$ on $M$. Assume that $\Omega_0$ is of charge $q\in\tor^*$ with respect to the $T^s$ action and that there exists a twist form $\alpha$ on $M$ of charge $\frac{q}{2}$. Then, the following forms:
    $$\omega:=\omega_0-\frac{i}{|\alpha|^2}\alpha\wedge\overline{\alpha},$$ $$\Omega:=\overline{\alpha}\wedge(\overline{\alpha}\cdot(\iota_{V_1}\cdots\iota_{V_s}\Omega_0)),$$ are basic with respect to the $T^s$ action and define an $SU(n)$-structure on a regular quotient $N=\mu^{-1}(c)/T^s$ for some $c\in\tor^*$.
\end{theo}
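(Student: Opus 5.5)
We have to prove Theorem \ref{twisttheorem}: the forms $\omega$ and $\Omega$ are basic for the $T^s$-action and define an $SU(n)$-structure on the quotient $N=\mu^{-1}(c)/T^s$. The argument has three parts: an algebraic check at each point of $\mu^{-1}(c)$, a basicness check, and a descent to $N$.

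\textbf{Pointwise decomposition.} Fix a point $p\in\mu^{-1}(c)$. Since the action is hamiltonian, $\mu^{-1}(c)$ is a $T^s$-invariant submanifold with tangent space $\ker d\mu=(J\,\Span\{V_i\})^{\perp}$. Its horizontal part $H=\Span\{V_i,JV_i\}^{\perp}$ is $J$-invariant of real dimension $2n$ and is identified with $T_{[p]}N$; the quotient metric, $J$ and $\omega_0|_H$ are the usual Kähler reduction data. Horizontality of $\alpha$ gives $\alpha(V_i)=0$, and since $\alpha$ is of type $(1,0)$ also $\alpha(JV_i)=i\alpha(V_i)=0$. So $\alpha|_H$ is a nonzero $(1,0)$-covector on $H$. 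Choose a unitary coframe $\theta^1,\dots,\theta^n$ of $H^{1,0}$ with $\theta^n=\alpha/|\alpha|$, and complete it with $\theta^{n+1},\dots,\theta^{n+s}$ spanned by $\mathbb{C}$-combinations of the duals of $V_i-iJV_i$.

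\textbf{Algebraic verification.} In the coframe above, $\omega_0=\frac{i}{2}\sum\theta^j\wedge\bar\theta^j$ (up to the paper's normalization) and $\Omega_0=f\,\theta^1\wedge\cdots\wedge\theta^{n+s}$ with $|f|$ fixed by the normalization. Then:
\begin{itemize}
\item $\omega|_H=\omega_0|_H-\frac{i}{|\alpha|^2}\alpha\wedge\bar\alpha$ flips the sign of the $\theta^n$ term, by choosing the normalization so that the coefficient $-i$ exactly reverses $\frac{i}{2}\theta^n\wedge\bar\theta^n$. This is the Kähler form for the almost complex structure $\tilde J$ that agrees with $J$ on $\theta^1,\dots,\theta^{n-1}$ and is reversed on the $\alpha$-direction. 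The metric is unchanged.
\item $\iota_{V_1}\cdots\iota_{V_s}\Omega_0$ restricted to $H$ is a nonzero multiple of $\theta^1\wedge\cdots\wedge\theta^n$, because the contractions kill exactly the $\theta^{n+j}$ directions.
\item Contracting with $\bar\alpha$ (the metric contraction, dual to $\alpha$) removes $\theta^n$, leaving $\theta^1\wedge\cdots\wedge\theta^{n-1}$.
\item Wedging with $\bar\alpha$ gives $\Omega\propto\theta^1\wedge\cdots\wedge\theta^{n-1}\wedge\bar\theta^n$. This is an $(n,0)$-form for $\tilde J$ and is decomposable.
\end{itemize}
The relations $\omega\wedge\Omega=0$ and $\Omega\wedge\bar\Omega=c_n\omega^n/n!$ then become the standard flat-model identities, up to checking that the modulus of the constant is $1$. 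This normalization (tracking $|\alpha|$, $|f|$ and the contraction conventions) is routine but must be done carefully; a phase can be absorbed.

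\textbf{Basicness.} For this I verify $\iota_{V_i}=0$ and $\lied_{V_i}=0$ on both forms, restricted to $\mu^{-1}(c)$.
\begin{itemize}
\item $\omega_0$ is invariant, being Kähler for a hamiltonian action. It is not horizontal, but $\iota_{V_i}\omega_0=d\mu_i$ vanishes on $T\mu^{-1}(c)$, so its pullback is basic.
\item The correction term $\alpha\wedge\bar\alpha/|\alpha|^2$ is horizontal. Its charges are $k_\alpha+\bar k_\alpha$ with $k_\alpha\in i\tor^*$, so they cancel and the term is invariant. Here $|\alpha|^2$ is invariant because the action is isometric.
\item For $\Omega$, horizontality comes from the skew-symmetry of the $\iota_{V_i}$ together with $\iota_{V_i}\bar\alpha=0$.
\item The charge of $\Omega$ is $q+2\bar k_\alpha=q-2\cdot\frac{q}{2}=0$. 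This uses that the contraction $\bar\alpha\cdot$ has the charge of $\alpha$ through the invariant metric, that $\iota_{V_j}$ commutes with $\lied_{V_i}$ since the torus is abelian, and that the charges are purely imaginary.
\end{itemize}

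\textbf{Descent and main obstacle.} Basic forms on $\mu^{-1}(c)$ descend to $N$, and the algebraic identities descend with them because the projection is a fibrewise isometry of $H$ onto $TN$. The main obstacle I expect is the charge bookkeeping for $\Omega$: the exact convention for the conjugate charge and for the contraction $\bar\alpha\cdot$ must be the one that makes the total charge vanish, not double. I would fix the sign by testing on the model $\mathbb{C}^{n+s}$ with the standard torus action.
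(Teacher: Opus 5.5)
The paper quotes this theorem from \cite{peres2026sunstructuresquotienttorusactions} without proof, so I am judging your argument on its own merits and against how the theorem is used in Section~\ref{3.4}. Your overall scheme is sound: adapted unitary coframe, reversal of the $\alpha$-line, horizontality of the pullbacks to $\mu^{-1}(c)$, charge count, and descent. The step you call routine normalization, however, is a genuine gap.

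In your coframe put $\alpha=\lambda\theta^n$ and $v^k_l=\theta^{n+k}(V_l)$. Since $\theta^j(V_l)=0$ for $j\le n$, one has exactly $\iota_{V_1}\cdots\iota_{V_s}\Omega_0=\pm f\det(v)\,\theta^1\wedge\cdots\wedge\theta^n$. So $\Omega$ is a universal constant times $f\,\bar\lambda^2\det(v)\,\theta^1\wedge\cdots\wedge\theta^{n-1}\wedge\bar\theta^n$, and $|\det v|$ is a fixed multiple of $\|V_1\wedge\cdots\wedge V_s\|$ (use $\omega_0(V_i,V_j)=0$). Hence
\[
\Omega\wedge\overline{\Omega}=\kappa\,|\alpha|^4\,\|V_1\wedge\cdots\wedge V_s\|^2\,c_n\frac{\omega^n}{n!},
\]
with $\kappa>0$ depending only on conventions. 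The prefactor is a $T^s$-invariant \emph{function}. No bookkeeping turns it into $1$, and nothing in the hypotheses makes it constant on $\mu^{-1}(c)$.

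The paper's own example shows this. There $|\eta|^2=r^2$ and $|\xi_3|=r$. On $\{r=\rho\}$, identified with $S$, the same formulas give $\omega=\rho^2\,\omega|_{r=1}$ and $\Omega=\rho^6\,\Omega|_{r=1}$, i.e.\ $\Omega\wedge\overline{\Omega}=\rho^6c_3\,\omega^3/3!$. So the structure is normalized only on $\{r=1\}$, i.e.\ $\mu=\tfrac12$, and this is what ``for some $c\in\tor^*$'' refers to. Your argument never uses the choice of $c$, so as written it would give the normalization on every level set, which is false.

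The missing step is to show that $|\alpha|^2\|V_1\wedge\cdots\wedge V_s\|$ is constant on some level set and to take $c$ where it equals $\kappa^{-1/2}$. In the $3$-Sasakian case this works because both factors depend on $r$ only. In general the function need not be constant on any level. For example, take the flat structure on $\C^4\setminus\{0\}$ with $S^1$-weights $(1,1,-1,-1)$, $q=0$ and $\alpha=z_3dz_1+z_1dz_3+z_4dz_2+z_2dz_4$; there it is proportional to $|z|^3$. In such cases $\Omega$ has to be divided by this invariant function, which keeps it basic.

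Three smaller points. First, the normalization of $|\alpha|^2$ is not a free choice. With the standard hermitian norm, $-\frac{i}{|\alpha|^2}\alpha\wedge\bar\alpha$ cancels the $\alpha$-line of $\omega_0$ instead of reversing it, leaving a degenerate form. The paper's convention ($|\eta|^2=r^2$ for $\eta=r^2(\eta_1+i\eta_2)$, i.e.\ half the hermitian norm) is what produces the reversal.

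Second, your justification of the charge count contradicts your formula $q+2\bar k_\alpha$. The operator $\bar\alpha\cdot$ is $\iota_w$ with $w=g^{-1}(\bar\alpha)$ (complex-bilinear dual), which is the $(1,0)$-vector your algebraic step needs. Since $\lied_{V_i}g=0$, $w$ carries the charge $\bar k_\alpha$ of $\bar\alpha$, not $k_\alpha$; with $k_\alpha$ the total charge would be $q$, not $0$.

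Third, $\bar k_\alpha=-k_\alpha$ should be derived rather than assumed: applying $\lied_{V_i}$ to $\Omega_0\wedge\overline{\Omega_0}=c_{n+s}\,\omega_0^{n+s}/(n+s)!$ gives $q+\bar q=0$.
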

\indent In the statement of the Theorem \ref{twisttheorem}, note that the charge $q$ of $\Omega_0$ takes values in the imaginary numbers as a consequence of the normalized condition of $\Omega_0$. An application of Theorem \ref{twisttheorem} is the construction of LT structures on the complex projective space $\C\p^3$, which first appeared in \cite{Tomasiello_2008,Koerber_2008} and then in \cite{peres2026sunstructuresquotienttorusactions} where all the $SU(3)$-structures coming from this twist theorem in the holomorphic setting were described.
\section{3-Sasakian geometry}\label{3}
\subsection{Hyperkähler manifolds}\label{3.1} The goal of this section is to recall the main facts about $3$-Sasakian manifolds that will be relevant for this paper, as well as  their generalization to orbifolds. $3$-Sasakian manifolds were first considered in \cite{Kuo1970OnAC} using the language of contact structures. In this paper, we shall follow the cone point of view of this type of manifolds presented for example in \cite{Boyer:1998sf} where the hyperkähler geometry is used to describe $3$-Sasakian structures. 
\begin{Def}
    A Riemannian manifold $(M,g)$ is an hyperkähler manifold if there exists three integrable complex structures $J_1,J_2,J_3$ such that $(M,g,J_i)$ is Kähler for $i=1,2,3$ and $J_1,J_2,J_3$ satisfy the quaternionic relations 
    $$J_1^2=J_2^2=J_3^2=J_1J_2J_3=-1.$$
\end{Def}
In other words, a hyperkähler manifold is a Riemannian manifold whose holonomy is contained in the group $Sp(n)$ for some $n$. Consequently, if $(M,g)$ is hyperkähler, then its dimension is $4n$ and has a Calabi-Yau structure since $Sp(n)\subset SU(2n)$. We can explicitly describe the holomorphic volume form of a hyperkähler manifold. \\ 
\indent Let $(M,g,J_i)$ be a hyperkähler manifold and denote $\omega_i:=g(J_i\cdot,\cdot)$ the associated Kähler form induced by $g$ and the complex structure $J_i$. We introduce the following $2$-form $\rho_1:=\omega_2+i\omega_3$. From the definitions, we have the easy fact:
\begin{lemma}
    $\rho_1$ is a non-degenerate, closed, $(2,0)$-form with respect to $J_1$.
\end{lemma}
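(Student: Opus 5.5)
Closedness is immediate: $d\rho_1=d\omega_2+i\,d\omega_3=0$, because each $(M,g,J_i)$ is Kähler and so each $\omega_i$ is closed. All the work is in the type and non-degeneracy claims. For both I would use the pointwise identities $\omega_i(X,Y)=g(J_iX,Y)$, the quaternionic relations $J_2J_1=-J_3$ and $J_3J_1=J_2$ (which follow from $J_1J_2J_3=-1$), and the fact that each $J_i$ is $g$-orthogonal.

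For the type, a complex $2$-form $\beta$ is of type $(2,0)$ with respect to $J_1$ exactly when $\beta(J_1X,Y)=i\beta(X,Y)$ for all $X,Y$. This condition is equivalent to $\beta$ vanishing whenever one argument lies in $T^{0,1}$. I compute the two pieces separately:
\[
\omega_2(J_1X,Y)=g(J_2J_1X,Y)=-g(J_3X,Y)=-\omega_3(X,Y),
\]
\[
\omega_3(J_1X,Y)=g(J_3J_1X,Y)=g(J_2X,Y)=\omega_2(X,Y).
\]
Adding them gives
\[
\rho_1(J_1X,Y)=-\omega_3(X,Y)+i\,\omega_2(X,Y)=i\bigl(\omega_2+i\omega_3\bigr)(X,Y)=i\rho_1(X,Y).
\]
So $\rho_1$ is of type $(2,0)$. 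A convention check is needed here: the sign could flip under a different orientation of the quaternionic relations. The relation $J_1J_2=J_3$, which follows from $J_1J_2J_3=-1$, is exactly what makes the sign come out right.

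For non-degeneracy, I read the condition for a $(2,0)$-form as $\rho_1^{n}\neq0$, where $\dim_{\mathbb C}(M,J_1)=2n$. Equivalently, the map $T^{1,0}\to (T^{1,0})^*$, $X\mapsto \iota_X\rho_1$, is injective. Take $X\in T^{1,0}$ with $\rho_1(X,\cdot)=0$ on $T^{1,0}$. Since $\rho_1$ already kills $T^{0,1}$, this means $\rho_1(X,\cdot)=0$ on all of $TM\otimes\C$. Write $X=U-iJ_1U$ with $U$ real. Then
\[
\rho_1(X,Y)=\rho_1(U,Y)-i\rho_1(J_1U,Y)=2\rho_1(U,Y).
\]
Taking real parts, $\omega_2(U,\cdot)=g(J_2U,\cdot)=0$. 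Since $g$ is positive definite and $J_2$ is invertible, this forces $U=0$. Hence $\rho_1$ is non-degenerate on $T^{1,0}$, i.e.\ it is a complex symplectic form, and $\rho_1^n$ is a nowhere-vanishing $(2n,0)$-form.

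The only delicate point is fixing the sign and convention in the type computation; everything else is direct.
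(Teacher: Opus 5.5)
Your proof is correct and follows the paper's argument: closedness from the Kähler condition, the $(2,0)$-type from $\omega_2(J_1X,Y)=-\omega_3(X,Y)$ via the quaternionic relations, and non-degeneracy from that of $\omega_2$. Your companion identity $\omega_3(J_1X,Y)=\omega_2(X,Y)$ also follows from the first by substituting $J_1X$ for $X$, and your $T^{1,0}$ argument just spells out the paper's one-line appeal to non-degeneracy, which is needed because a $(2,0)$-form is necessarily degenerate on all of $TM\otimes\C$.
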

\begin{proof}
    The non-degeneracy and closure of $\rho_1$ follow from the ones of $\omega_2$ and $\omega_3$. To prove that $\rho_1$ is $(2,0)$ with respect to $J_1$ it suffices to prove that for any vector fields $X,Y$ on $M$, we have
    $$\rho_1(J_1X,Y)=i\rho_1(X,Y).$$ Using the quaternionic relations, we see that $$\omega_2(J_1X,Y)=g(J_2J_1X,Y)=-g(J_3X,Y)=-\omega_3(X,Y),$$ which implies the result. 
\end{proof}
$\rho_1$ is called a \textit{holomorphic symplectic form}. This form provides a way to holomorphically trivialize the canonical line bundle with respect to $J_1$, $K_{M,1}=\Lambda^{2n,0}_{J_1}T^*M$ by considering $\Omega_{(2n)}=\frac{1}{n!}\rho_1^n$. In other words, we have proved that
\begin{prop}
    $(M,g,J_1,\omega_1,\Omega_{(2n)})$ is an $SU(2n)$-structure without torsion.
\end{prop}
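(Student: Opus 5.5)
We need to show that $(g,J_1,\omega_1,\Omega_{(2n)})$ with $\Omega_{(2n)}=\frac{1}{n!}\rho_1^n$ satisfies the two algebraic conditions of an $SU(2n)$-structure, and that all torsion classes vanish. Torsion-freeness is the easy part. Since $(M,g,J_1)$ is Kähler, $d\omega_1=0$. The previous Lemma gives $d\rho_1=0$, hence $d\Omega_{(2n)}=\frac{1}{n!}\,d(\rho_1^n)=0$. Comparing with the defining equations of the intrinsic torsion, and using uniqueness of the type decomposition, every $W_i$ vanishes. So the real work is algebraic.

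First, $\Omega_{(2n)}$ is a nowhere-vanishing $(2n,0)$-form for $J_1$. By the Lemma, $\rho_1$ is of type $(2,0)$, so $\rho_1^n$ is of type $(2n,0)$. Because it is a top-degree holomorphic form, it is a section of the canonical line bundle. Non-vanishing follows from non-degeneracy of $\rho_1$ on $T^{1,0}M$, which has complex rank $2n$, since the $n$-th power of a non-degenerate complex $2$-form there is a volume form. Next, $\omega_1\wedge\Omega_{(2n)}=0$ is automatic: $\omega_1$ is of type $(1,1)$, so the product has type $(2n+1,1)$, which is zero because $T^{1,0}M$ has rank $2n$.

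The main step is the normalization $\Omega_{(2n)}\wedge\overline{\Omega_{(2n)}}=c_{2n}\frac{\omega_1^{2n}}{(2n)!}$. Both sides are top forms, and the identity is pointwise and $Sp(n)$-invariant. I would therefore check it in an adapted frame. At a point, choose a $g$-orthonormal basis of the form $e_a, J_1e_a, J_2e_a, J_3e_a$ for $a=1,\dots,n$, and set $z^{2a-1}=e^a+i(J_1e_a)^\flat$ and $z^{2a}=(J_2e_a)^\flat+i(J_3e_a)^\flat$, with signs fixed by the convention $\omega_i=g(J_i\cdot,\cdot)$. In this basis:
\[
\omega_1=\tfrac{i}{2}\sum_{k=1}^{2n} z^k\wedge\bar z^k,\qquad \rho_1=\sum_{a=1}^n z^{2a-1}\wedge z^{2a}
\]
up to an overall constant and sign, which I would compute by evaluating on basis vectors. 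It then follows that
\[
\frac{1}{n!}\rho_1^n=z^1\wedge\cdots\wedge z^{2n}
\]
(times that constant), and the standard flat computation $z^1\wedge\dots\wedge z^{2n}\wedge\overline{z^1\wedge\dots\wedge z^{2n}}=c_{2n}\frac{\omega_1^{2n}}{(2n)!}$ finishes the argument.

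The expected obstacle is exactly this bookkeeping of the constant. If the check gives $\rho_1=\lambda\sum_a z^{2a-1}\wedge z^{2a}$ with $|\lambda|\neq1$, for example $\lambda=-1$ or a factor of $2$ from the $\omega$ versus $z\wedge\bar z$ conventions, a phase is harmless and I would absorb it. A modulus different from $1$ would mean the normalization holds only after rescaling $\Omega_{(2n)}$ by a constant. In that case I would verify with $n=1$ on $\Hq\cong\R^4$, where the check reduces to the identity $\rho_1\wedge\bar\rho_1=2\omega_2^2=2\omega_1^2$, and confirm the constant before concluding.
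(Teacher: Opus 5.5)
Your proposal is correct and follows the paper's route. The preceding Lemma gives that $\rho_1$ is closed and of type $(2,0)$; together with $d\omega_1=0$ this makes $\frac{1}{n!}\rho_1^n$ a closed trivialization of the canonical bundle with vanishing torsion, and your adapted-frame check adds the normalization $\Omega_{(2n)}\wedge\overline{\Omega_{(2n)}}=c_{2n}\,\omega_1^{2n}/(2n)!$, which the paper leaves implicit. Your hedge about rescaling is unnecessary: with $\omega_i=g(J_i\cdot,\cdot)$, $J_1J_2=J_3$ and $\theta^0,\dots,\theta^3$ dual to $e_a,J_1e_a,J_2e_a,J_3e_a$, one gets $\omega_2=\theta^0\wedge\theta^2-\theta^1\wedge\theta^3$ and $\omega_3=\theta^0\wedge\theta^3+\theta^1\wedge\theta^2$ on each quaternionic line, so $\rho_1=\sum_a z^{2a-1}\wedge z^{2a}$ exactly and $\frac{1}{n!}\rho_1^n=z^1\wedge\cdots\wedge z^{2n}$ with constant $1$.
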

\begin{rk}
    Note that we could have done a similar construction using either $J_2$ or $J_3$.
\end{rk}
\subsection{$3$-Sasakian manifolds}\label{3.2} In this paragraph, we describe the main properties of $3$-Sasakian manifolds. 
\begin{Def}
    A Riemannian manifold $(S,g)$ is a $3$-Sasakian manifold if its Riemannian cone $(C(S)=S\times\R_{>0},\bar g=dr^2+r^2g)$ is an hyperkähler manifold.
\end{Def}
The dimension of a $3$-Sasakian manifold is $4n+3$ for some $n$. For a $3$-Sasakian manifold $(S,g)$, we let $J_1,J_2,J_3$ the three complex structures of the hyperkähler structure on its cone and $\omega_1,\omega_2,\omega_3$ the corresponding Kähler forms. In what follows, we shall see $S=S\times\{1\}\hookrightarrow C(S)$ isometrically embedded in its cone. From the cone, we can recover the data of the contact structures, by considering the Reeb vector fields $\xi_i=J_i(r\partial_r)$ which are tangent to $S$, the dual contact $1$-forms $\eta_i=\frac{1}{r^2}\bar g(\xi_i,\cdot)$ and the characteristic $(1,1)$-tensor $\Phi_i=\nabla\xi_i$ defined on $S$ where $\nabla$ is the Levi-Civita connection of $g$ on $S$.
\begin{prop}\cite{Boyer:1998sf}\label{trisasakien} Let $(S,g)$ be a $3$-Sasakian structure with Reeb vector fields $\xi_i$, dual contact $1$-forms $\eta_i$ and characteristic $(1,1)$-tensor $\Phi_i$. The following properties hold:
$$\eta_i(\xi_j)=\delta_{ij},\,\iota_{\xi_i}d\eta_i=0,$$ $$\Phi_i\xi_j=\epsilon_{ijk}\xi_k,$$ $$\Phi_i\Phi_j-\eta_j\otimes\xi_i=-\delta_{ij}+\epsilon_{ijk}\Phi_k,$$ $$[\xi_i,\xi_j]=-2\epsilon_{ijk}\xi_k,$$ $$\omega_i=\frac{1}{2}d(r^2\eta_i).$$

\end{prop}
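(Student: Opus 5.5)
Everything here will be derived on the cone $C(S)$ from the hyperkähler data, and then restricted to $S=\{r=1\}$. The convention is fixed by $\omega_i=\bar g(J_i\cdot,\cdot)$ together with $\xi_i=J_i(r\partial_r)$ and $\eta_i=r^{-2}\bar g(\xi_i,\cdot)$.

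\textbf{Step 1: the identities that follow from the definitions.} Evaluating the definition of $\eta_i$ on $\xi_j$ gives
$$\eta_i(\xi_j)=r^{-2}\bar g(J_ir\partial_r,J_jr\partial_r)=r^{-2}\bar g(-J_jJ_i r\partial_r,r\partial_r).$$
By the quaternionic relations this equals $\delta_{ij}$. The cross term drops out because $\bar g(J_k r\partial_r,r\partial_r)=0$ by skew-symmetry of $J_k$.

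\textbf{Step 2: the exact form of $\omega_i$.} Since $\omega_i$ is Kähler for $J_i$ and the cone metric is homogeneous, $\lied_{r\partial_r}\bar g=2\bar g$. From this one gets $\lied_{r\partial_r}\omega_i=2\omega_i$. Because $d\omega_i=0$, Cartan's formula then gives
$$\omega_i=\tfrac12 d\iota_{r\partial_r}\omega_i.$$
Next, $\iota_{r\partial_r}\omega_i=\bar g(J_ir\partial_r,\cdot)=r^2\eta_i$, which yields $\omega_i=\frac12 d(r^2\eta_i)$. Expanding,
$$\omega_i=r\,dr\wedge\eta_i+\tfrac{r^2}{2}d\eta_i.$$
Contracting this with $\xi_i$ gives $\iota_{\xi_i}\omega_i=-\bar g(r\partial_r,\cdot)=-r\,dr$ on the one hand, and $-r\,dr+\frac{r^2}{2}\iota_{\xi_i}d\eta_i$ on the other. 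Hence $\iota_{\xi_i}d\eta_i=0$.

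\textbf{Step 3: the tensors $\Phi_i$.} I will use the standard cone formulas $\bar\nabla_X(r\partial_r)=X$ and $\bar\nabla_X Y=\nabla_XY-r\,g(X,Y)\partial_r$ for $X,Y$ tangent to $S$. Since $J_i$ is parallel, for $X$ tangent to $S$ at $r=1$,
$$\bar\nabla_X\xi_i=J_i\bar\nabla_X(r\partial_r)=J_iX.$$
Taking the tangential part gives $\Phi_iX=(J_iX)^{T}=J_iX-\eta_i(X)\cdot(\text{normal part})$. Here $J_iX$ has normal component $\bar g(J_iX,\partial_r)=-\eta_i(X)$ times $\partial_r$, so
$$\Phi_iX=J_iX+\eta_i(X)\partial_r.$$
Applying this to $\xi_j$ gives $\Phi_i\xi_j=J_iJ_jr\partial_r+\delta_{ij}r\partial_r=\epsilon_{ijk}\xi_k$. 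Composing the formula twice and using the quaternionic relations gives $\Phi_i\Phi_j X$. The term $-\delta_{ij}X$ arises, and the correction term is $\eta_j(X)\xi_i$, which comes from $J_i\partial_r=\xi_i$. This produces the stated identity for $\Phi_i\Phi_j-\eta_j\otimes\xi_i$.

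\textbf{Step 4: the bracket.} Since $\bar\nabla$ is torsion-free,
$$[\xi_i,\xi_j]=\bar\nabla_{\xi_i}\xi_j-\bar\nabla_{\xi_j}\xi_i=J_j\xi_i-J_i\xi_j=(J_jJ_i-J_iJ_j)r\partial_r=-2\epsilon_{ijk}\xi_k.$$

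\textbf{Main obstacle.} I expect the hard part to be the sign bookkeeping rather than anything conceptual. The signs to track are those in $\bar\nabla$ on the cone, the normal components in the $\Phi_i$ relation, and the orientation convention in $J_iJ_j=-\delta_{ij}+\epsilon_{ijk}J_k$. I would check every identity against the model case $S^7\subset\Hq^2$ to fix these signs.
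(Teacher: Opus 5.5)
The paper gives no proof of this proposition; it quotes the identities from Boyer--Galicki. Your argument is correct and self-contained. It derives every identity from the cone definition the paper actually adopts, using only $\bar\nabla J_i=0$, $\bar\nabla(r\partial_r)=\mathrm{Id}$ and the Gauss formula for $S=\{r=1\}$. The key link between $S$ and $C(S)$ is the formula $\Phi_iX=J_iX+\eta_i(X)\partial_r$.

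What the citation buys is brevity. However, it leaves the reader to reconcile sign conventions, which vary in the literature: $\omega_i=\bar g(J_i\cdot,\cdot)$ versus its negative, $\xi_i=J_i(r\partial_r)$, $\Phi_i=\nabla\xi_i$, and $J_1J_2=J_3$. What your computation buys is a check that the signs as stated are the right ones for the paper's conventions, in particular $[\xi_i,\xi_j]=-2\epsilon_{ijk}\xi_k$ and $\omega_i=\frac12 d(r^2\eta_i)$. This matters because the later charge computations rest on them: $\lied_{\xi_3}\eta_i=-2\epsilon_{ij3}\eta_j$, and hence the charges $4i$ and $2i$. Your argument only gives the direction from the hyperk\"ahler cone to the identities, but that is all the paper uses.

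Two steps should be made explicit.

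\textbf{Step 2.} Homogeneity $\lied_{r\partial_r}\bar g=2\bar g$ alone does not give $\lied_{r\partial_r}\omega_i=2\omega_i$; you also need $\lied_{r\partial_r}J_i=0$. This follows from the same facts you use in Step 3:
$(\lied_{r\partial_r}\omega_i)(X,Y)=(\bar\nabla_{r\partial_r}\omega_i)(X,Y)+\omega_i(\bar\nabla_X(r\partial_r),Y)+\omega_i(X,\bar\nabla_Y(r\partial_r))=2\omega_i(X,Y)$.

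\textbf{Step 3.} After rewriting $\epsilon_{ijk}J_kX=\epsilon_{ijk}(\Phi_kX-\eta_k(X)\partial_r)$, you are left with the normal term $\bigl(\eta_i(\Phi_jX)-\epsilon_{ijk}\eta_k(X)\bigr)\partial_r$. It vanishes because every other term is tangent to $S$. Alternatively, check directly at $r=1$ that $\eta_i(\Phi_jX)=\bar g(J_i\partial_r,J_jX)=\epsilon_{ijk}\eta_k(X)$.
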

In what follows, we shall refer to a $3$-Sasakian manifold as $(S,g,\xi_i,\eta_i,\Phi_i)$ to higlight the $3$ compatible Sasakian structures.
\begin{rk}
    It is well-known that the Sasakian manifold is Einstein iff its cone is Calabi-Yau. Since a hyperkähler manifold is in particular Calabi-Yau, every $3$-Sasakian manifold is Einstein.
\end{rk}
\begin{rk}
    Using the three Sasakian structures, we can in fact consider a whole $S^2$ family of Sasakian structures. More precisely, take $u=(a,b,c)\in S^2$ and consider $\xi_u:=a\xi_1+b\xi_2+c\xi_3$. This $\xi_u$ defines another Reeb vector field and consequently another Sasakian structure. Note that this Reeb vector field corresponds to consider the complex structure $J_u:=aJ_1+bJ_2+cJ_3$ on the cone which also defines a Kähler structure associated to the metric $\bar g$. 
\end{rk}
Given a compact $3$-Sasakian manifold $(S,g,\xi_i,\eta_i,\Phi_i)$, there is a natural rank $3$ foliation $\F$ induced by the three Reeb vector fields. Note that this foliation is automatically \textit{quasi-regular} and so the space of leaves $B=S/\F$ is a compact orbifold. If the foliation is further assumed to be \textit{regular}, then $B$ is a genuine manifold and $S$ is a principal $G$-bundle over $B$ for $G=SU(2)$ or $SO(3)$. We can also consider the rank $1$ foliation $\F_u\subset\F$ generated by $\xi_u$ where $u\in S^2$. The foliations $\F_u$ and $\F$ are related by
\begin{prop}\cite{Boyer:1998sf}
    Let $(S,g,\xi_i,\eta_i,\Phi_i)$ be a compact $3$-Sasakian manifold. The following statements are equivalent:
    \begin{enumerate}[(i)]
        \item $\F$ is regular,
        \item For every $u\in S^2,\,\F_u$ is regular,
        \item For one $u\in S^2,\,\F_u$ is regular.
    \end{enumerate}
\end{prop}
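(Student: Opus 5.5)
The plan is to prove the cycle of implications (i)$\Rightarrow$(ii)$\Rightarrow$(iii)$\Rightarrow$(i). The middle step (ii)$\Rightarrow$(iii) is trivial.

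\emph{Group action.} By Proposition \ref{trisasakien}, the Reeb fields satisfy $[\xi_i,\xi_j]=-2\epsilon_{ijk}\xi_k$, so they span a copy of $\su(2)$ acting by Killing fields. Since $S$ is compact, this infinitesimal action integrates to a locally free action of a Lie group $G$, with $G=SU(2)$ or $SO(3)$. The leaves of $\F$ are then exactly the $G$-orbits, and each leaf of $\F_u$ is the orbit of the one-parameter subgroup $\exp(t\xi_u)\subset G$. Moreover, the $G$-action by conjugation (the adjoint action) acts transitively on the unit vectors $u\in S^2\subset\su(2)$. Concretely, for $h\in G$ we have $h_*\xi_u=\xi_{\mathrm{Ad}_h u}$, so the isometry $h$ maps the foliation $\F_u$ onto $\F_{\mathrm{Ad}_h u}$. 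All the $\F_u$ are therefore isometric to one another. This gives (iii)$\Rightarrow$(ii) for free, and reduces the problem to showing that $\F$ is regular if and only if some $\F_u$ is regular.

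\emph{Regularity as a stabilizer condition.} A locally free action of a compact group gives a regular foliation exactly when all isotropy groups $G_x$ (which are finite) are trivial, or more precisely when the leaf space is a manifold with principal bundle structure. For the circle, note that $\exp(t\xi_u)$ is periodic: its closure is a maximal torus of $G$, since $\xi_u$ has rational eigenvalues under the normalization $[\xi_i,\xi_j]=-2\epsilon_{ijk}\xi_k$. So $\F_u$ is regular exactly when the circle $T_u$ acts with trivial stabilizers, possibly after quotienting by the generic stabilizer, e.g.\ $\pm1$ in $SU(2)$.

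\emph{The two nontrivial directions.} For (i)$\Rightarrow$(ii): if $G$ acts freely modulo its generic kernel, then so does every subgroup $T_u$. For (iii)$\Rightarrow$(i): suppose some $x$ has a nontrivial finite stabilizer $G_x$, and pick $1\neq h\in G_x$. Since $G$ is compact and connected, $h$ lies in some maximal torus, which equals $T_{u'}$ for some $u'$. Then $h$ fixes $x$, so the $T_{u'}$-orbit through $x$ is exceptional and $\F_{u'}$ is not regular. By the transitivity established above, no $\F_u$ is regular, contradicting (iii).

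\emph{Main obstacle.} The delicate point is the bookkeeping of the generic stabilizer, which depends on whether $G=SU(2)$ or $SO(3)$ and on the generic period of the Reeb flow. The argument has to show that "regular" for $\F$ and for $\F_u$ is measured against the same normalizing kernel, namely the center element $-1$ acting trivially. I would treat this by passing to the effective quotient group in both cases.
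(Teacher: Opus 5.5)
\textbf{Gap in (i)$\Rightarrow$(ii).} Your (iii)$\Rightarrow$(ii) via $h_*\xi_u=\xi_{\mathrm{Ad}_h u}$ is correct. So is your (iii)$\Rightarrow$(i) via maximal tori, once $G$ is replaced by its effective quotient: if $\F_{u'}$ is regular, all $T_{u'}$-stabilizers equal one subgroup acting trivially on $S$, which is therefore trivial, so no $1\neq h\in G_x\cap T_{u'}$ can exist.

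The gap is (i)$\Rightarrow$(ii). It rests on your claim that the orbit foliation of a locally free compact group action is regular exactly when the isotropy groups are trivial modulo a central kernel. That is false in general. Regularity only says that each $G_x$ acts trivially on its slice, i.e.\ all isotropy groups are conjugate to one principal isotropy group $H$. If $H$ were a non-central finite subgroup, $T_u\cap gHg^{-1}$ would jump with $g$ and $\F_u$ would have exceptional leaves while $\F$ stays regular.

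This really happens in dimension $3$. Take the $3$-Sasakian lens space $\Z_3\backslash S^3$, with $\Z_3$ acting on the side that commutes with the Reeb action. There $G=SU(2)$ acts effectively, with every isotropy group conjugate to $\Z_3$. The foliation $\F$ has a single leaf, so it is regular. Yet every $\F_u$ has two exceptional fibres whose period is one third of the generic one. So your ``main obstacle'' paragraph, which simply declares that the normalizing kernel is the centre, assumes exactly what has to be proved; it is not bookkeeping.

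\textbf{Missing lemma.} The missing input is geometric and needs $\cH\neq 0$, i.e.\ $\dim S\geq 7$. The alternative is to take ``$\F$ regular'' to \emph{mean} that $S\to B$ is a principal $SU(2)$- or $SO(3)$-bundle, as the text before the proposition suggests. Then (i)$\Rightarrow$(ii) is trivial, but your general claim about regular foliations must be dropped.

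The lemma is: if $\F$ is regular and $h\in G_x$, then $h$ is central. First, $dh_x$ is the identity on the slice $\cH_x$. Next, $h$ is an isometry with $h_*\xi_u=\xi_{\mathrm{Ad}_h u}$, so it intertwines $\Phi_u:=\nabla\xi_u=\sum_i u_i\Phi_i$ with $\Phi_{\mathrm{Ad}_h u}$. At the fixed point $x$ this gives $\Phi_u|_{\cH_x}=\Phi_{\mathrm{Ad}_h u}|_{\cH_x}$ for all $u$. Since $\Phi_1,\Phi_2,\Phi_3$ satisfy the quaternionic relations on $\cH_x\neq 0$, the map $u\mapsto \Phi_u|_{\cH_x}$ is injective. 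Hence $\mathrm{Ad}_h=\mathrm{id}$, so $h$ is central and trivial in the effective group. Consequently $\F$ regular implies that $G$ acts freely, which implies that every $T_u$ acts freely. With this lemma inserted, your argument goes through.
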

We will denote $Z=S/{\F_u}$ for some $u\in S^2$. This $Z$ is called the \textit{twistor space} of $S$ which in general is an orbifold; the name will become clearer in the next paragraph. Since $(S,g)$ is Einstein, the classical result of Sasakian geometry, see for example \cite[Theorem 2.1.2]{Boyer:1998sf}, shows that $Z$ carries a Kähler-Einstein metric $g_Z$ of positive scalar curvature. More precisely, if $\pi_Z : (S,g)\to (Z,g_Z)$ denotes the Riemannian submersion, we have that $g=\pi_Z^*g_Z+\eta_u\otimes\eta_u$ where $\eta_u$ is the dual $1$-form of $\xi_u$. \\ \\
\indent In the sequel, it is convenient to introduce the horizontal and vertical distributions:
$$\cV=\Span\{\xi_1,\xi_2,\xi_3\},\,\,\cH=\cV^\perp.$$ A form $\alpha$ on $S$ is $\textit{horizontal}$ if $\iota_{\xi_i}\alpha=0$ or equivalently, defines a section of the exterior algebra of the dual horizontal bundle. Pointwise, $\cH$ can be identified with $TB$ and so any section of $\Lambda\cH^*$ can locally be seen as a form on $B$ and globally if they are $\textit{basic}$ that is $\iota_{\xi_i}\alpha=\lied_{\xi_i}\alpha=0$. $B$ is also equipped with a Riemannian metric $g_B$ \cite{Boyer:1998sf} which is characterized by $$g|_{\cH}=\pi_B^*g_B$$ where $\pi_B : S\to B$. $g$ and $g_B$ are related by $$g=\pi^*_Bg_B+\sum_{i=1}^3 \eta_i\otimes\eta_i.$$ 
\indent We end this section by reviewing a quotient reduction of $3$-Sasakian manifolds using moment maps that we are going to use in Section \ref{4.2}. We first define an isometry of a $3$-Sasakian structure.
\begin{Def}
    Let $(S,g,\xi_i)$ be a $3$-Sasakian manifold. A $3$-Sasakian isometry of $(S,g,\xi_i)$ is a diffeomorphism $\phi$ of $S$ such that $$\phi^*g=g,\, \phi_*\xi_i=\xi_i.$$ We denote $\Isom(S,g,\xi)$ the group of $3$-Sasakian isometries of $(S,g,\xi_i)$. 
\end{Def}
Note that any element $\phi\in\Isom(S,g,\xi)$ can be lifted to an isometry on the hyperkähler cone and also preserves the hyperkähler structure. The action of a subgroup $G$ acting by isometries on $(C(S),\bar g)$ and preserving the complex structures gives rise to a hyperkähler moment map $\mu : C(S)\to\g^*\otimes\R^3$, see \cite{Hyperkahlermetricssupersymmetry} for more details. The restriction of this map to $S$ will be called a \textit{$3$-Sasakian moment map}. If $G$ is a compact connected Lie group acting by $3$-Sasakian isometries, then Proposition $6.1.2$ in \cite{Boyer:1998sf} shows that there is a unique $3$-Sasakian moment map $\mu_S$ which will be relevant in the following:
\begin{theo}\label{reductiontheorem}\cite[Theorem 6.1.5]{Boyer:1998sf} Let $(S,g,\xi_i)$ be a $3$-Sasakian manifold of dimension $4n+3$. Assume that there is a connected compact Lie group $G$ acting smoothly and properly on $S$ by $3$-Sasakian isometries. Let $\mu_S$ be the corresponding $3$-Sasakian moment map and suppose that $0$ is a regular value of $\mu_S$ and that $G$ acts freely on $\mu_S^{-1}(0)$. Then, the $3$-Sasakian quotient $S/\!\!/G=\mu^{-1}(0)/G$ is of dimension $4(n-\dim G)+3$ and admits a $3$-Sasakian structure $(\hat g,\hat\xi_i)$ which are characterized by
$$g|_{\mu^{-1}_S(0)}=\pi^*\hat g,$$ $$\pi_*(\xi_i|_{\mu^{-1}_S(0)})=\hat\xi_i,$$ where $\pi : \mu^{-1}_S(0)\to S/\!\!/G $. 
\end{theo}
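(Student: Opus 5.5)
The plan is to lift everything to the hyperkähler cone, apply hyperkähler reduction there, and then show that the reduced hyperkähler manifold is again a metric cone. Its link will be $S/\!\!/G$.

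\emph{Step 1: lift the action to the cone.} Since $G$ acts by $3$-Sasakian isometries, extend the action to $C(S)$ trivially in the $r$-direction. It then commutes with $r\partial_r$, preserves $\bar g=dr^2+r^2g$, and preserves each $J_i$: this follows from $\phi_*\xi_i=\xi_i$ and $\xi_i=J_i(r\partial_r)$, together with the fact that the $J_i$ are determined on the cone by $g$ and the $\xi_i$.

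\emph{Step 2: the moment map.} For $\zeta\in\g$ with induced vector field $X_\zeta$, we have $\lied_{X_\zeta}(r^2\eta_i)=0$. Combined with $\omega_i=\frac12 d(r^2\eta_i)$ from Proposition \ref{trisasakien}, Cartan's formula gives
$$\iota_{X_\zeta}\omega_i=-\tfrac12 d\big(r^2\eta_i(X_\zeta)\big).$$
Hence $\mu_i(\zeta)=\frac{r^2}{2}\eta_i(X_\zeta)$, up to sign convention, is a hyperkähler moment map. It is homogeneous of degree $2$ in $r$, and its restriction to $r=1$ is $\mu_S$; by the uniqueness in Proposition 6.1.2 this is the $3$-Sasakian moment map.

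\emph{Step 3: the zero level set.} By homogeneity, $\mu^{-1}(0)=C(\mu_S^{-1}(0))$. Also $d\mu(r\partial_r)=2\mu$ vanishes on the zero set, and $d\mu$ in the $S$-directions is $r^2d\mu_S$. So $0$ is a regular value of $\mu$ on $C(S)$, and $G$ acts freely on $\mu^{-1}(0)$.

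\emph{Step 4: reduce and identify the cone.} The Hitchin--Karlhede--Lindström--Roček theorem now gives a hyperkähler structure on $\mu^{-1}(0)/G$ of dimension $4(n+1)-4\dim G$. It remains to show this quotient is the cone over $\mu_S^{-1}(0)/G$.

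First, $r\partial_r$ is tangent to $\mu^{-1}(0)$ and orthogonal to the orbits, since $X_\zeta$ is tangent to $S$. Next, on the zero set $\bar g(\xi_i,X_\zeta)=r^2\eta_i(X_\zeta)=2\mu_i(\zeta)=0$, so each $\xi_i$ is orthogonal to the orbits. Moreover $\xi_i=J_i(r\partial_r)$, and the HKLR horizontal space (the orthogonal complement of $\g\cdot p\oplus J_1\g\cdot p\oplus J_2\g\cdot p\oplus J_3\g\cdot p$) is $J_i$-invariant and contains $r\partial_r$; hence the $\xi_i$ are horizontal and tangent to the level set. Splitting the horizontal space as $\R r\partial_r\oplus(\text{horizontal part tangent to } r=1)$ shows that the quotient metric is $dr^2+r^2\hat g$, where $\hat g$ is the submersion metric defined by $g|_{\mu_S^{-1}(0)}=\pi^*\hat g$ on horizontal vectors.

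\emph{Step 5: conclude.} The cone over $(S/\!\!/G,\hat g)$ is therefore hyperkähler, so $S/\!\!/G$ is $3$-Sasakian of dimension $4(n-\dim G)+3$. Its Reeb fields are $\hat\xi_i=\hat J_i(r\partial_r)=\pi_*(J_i r\partial_r)=\pi_*\xi_i$, which is exactly the characterization in the statement.

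The main obstacle is Step 4. One must check carefully that the HKLR horizontal space contains $r\partial_r$ and the $\xi_i$, and that the dilation action descends compatibly. This is what makes the reduced hyperkähler metric a genuine cone metric, rather than just some hyperkähler metric invariant under dilations. To settle it, I would verify directly that $J_j X_\zeta\perp r\partial_r$ on the zero set, which reduces to $\mu_j=0$ again.
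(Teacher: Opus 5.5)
The paper contains no proof of this statement. It is quoted from Boyer--Galicki \cite[Theorem 6.1.5]{Boyer:1998sf}, so there is no in-paper argument to compare against.

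Your proof is correct. It is the standard argument that hyperkähler reduction of the cone is compatible with passing to the link: $\mu^{-1}(0)/G$ is the cone over $S/\!\!/G$, and $\bar g$ descends to $dr^2+r^2\hat g$. You do not state equivariance of $\mu_i(\zeta)=\frac{r^2}{2}\eta_i(X_\zeta)$, but it follows at once from the $G$-invariance of the $\eta_i$.

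The step you flag as the main obstacle is already settled by your Step 3. At a point $p$ of the zero set, $\ker d\mu_p=(J_1\g\cdot p+J_2\g\cdot p+J_3\g\cdot p)^\perp$. You showed that $r\partial_r$ is tangent to $\mu^{-1}(0)$, so it is automatically orthogonal to every $J_jX_\zeta$. Together with $r\partial_r\perp X_\zeta$, this places $r\partial_r$ in the $J_i$-invariant HKLR horizontal space, and hence each $\xi_i=J_i(r\partial_r)$ lies there too.

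An alternative would be to argue intrinsically on $S$. One would project the $\xi_i$ to orthonormal Killing fields on $\mu_S^{-1}(0)/G$ satisfying the $\su(2)$ relations, then verify the Sasakian curvature condition on the quotient using O'Neill's formulas. Your cone route instead gets closedness and integrability for free from the HKLR theorem. It also matches exactly the cone definition of $3$-Sasakian that the paper uses. The only cost is passing through the non-compact cone, which is harmless here because $G$ acts trivially in the $r$-direction.
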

We end this section by recalling that a \textit{3-Sasakian orbifold} is a Riemannian orbifold that admits three characteristic vector fields satisfying the relations of Proposition \ref{trisasakien} and are preserved by the local unirformizing groups, cf.~\cite[Remark 1.2.3]{Boyer:1998sf}. In particular, all the forms and tensors we have considered for the manifold case are also globally well-defined for orbifolds and everything that have been discussed so far for $3$-Sasakian manifolds also work for $3$-Sasakian orbifolds. 
We shall call the foliation generated by a Reeb vector  $\xi_u$ \textit{quasi-regular} when $\xi_u$ integrates to an almost-free, effective $S^1$-action, and \textit{regular} when this action is free.

\subsection{$3$-Sasakian manifolds of dimension 7}\label{3.3} In this paragraph, we focus on the $7$ dimensional case as it will be the starting point of our main results. Let $(S,g,\xi_i,\eta_i,\Phi_i)$ be a $3$-Sasakian manifold of dimension $7$. Then $S$ fibers over $Z$ with $S^1$ fibers which in turn fibers over $B$ with $S^2\simeq\C\p^1$ fibers. We recall here how $Z$ appears as the twistor space of $B$ following \cite{quaternionkahler}. \\ 
\indent In the regular case, $B$ is a quaternionic Kähler manifold, there exists a rank $3$-subbundle $E\subset\End TB$ which is locally spanned by three almost complex structures $I,J,K$, stable by the Levi-Civita connection and satisfying the quaternionic relations. This gives an identification with the space of self-dual bivectors $\Lambda^+TB$. Indeed, take $(e_1,\cdots,e_4)$ a local orthonormal frame, then $\Lambda^+TB$ is spanned by $e_1\wedge e_2+e_3\wedge e_4, e_1\wedge e_3-e_2\wedge e_4, e_1\wedge e_4+e_2\wedge e_3$ and by interpreting each one of these three forms as a local Kähler form gives the local $I,J,K$ and allows to identify $\Lambda^+TB$ with $E$. The twistor space of $B$ is then defined as the sphere bundle of $\Lambda^+TB$ which then is identified with the sphere of $E$ in $\End TB$. In the $3$-Sasakian case, we can describe $E$ as follows. For any $i$, $\Phi_i$ can be restricted to $\mathcal{H}$ and thus defines locally three almost complex structures which satisfy the quaternionic relations as the $\Phi_i$ satisfy these identities on $\mathcal{H}$ by Proposition \ref{trisasakien}. Hence, $E\simeq S\underset{G}{\times}\R^3$ where $G$ is either $SO(3)$ or $SU(2)$. Choose $u\in S^2$, the group $G$ acts transitively on $S^2$ and if $G_u$ denotes the stabilizer of $u$, which corresponds to the leaf of the foliation $\F_u$, then we have the isomorphisms
$$G_u\simeq S^1, \, \, G/G_u\simeq S^2.$$ The twistor space of $B$ then reads as 
$$\mathcal{S}(E)\simeq S\underset{G}{\times}S^2\simeq S\underset{G}{\times}(G/G_u)\simeq S/G_u\simeq Z.$$
\indent Using the classification of compact Kählerian twistor spaces obtained in \cite{Hitchintwistor, twistorspaces}, Friedrich and Kath proved the following result:
\begin{theo}\label{theoremclassification}\cite[Theorem 7]{Friedrich:1990zg}
    Let $(S,g)$ be a compact simply connected regular $7$-dimensional $3$-Sasakian manifold. Then, $S$ is isometric to either the $7$-sphere or to $SU(3)/U(1)$. 
\end{theo}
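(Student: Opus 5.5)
The argument has three stages: reduce to a statement about the twistor space $Z$, invoke the classification of compact Kählerian twistor spaces, and lift the resulting isometry back up to $S$. Since $S$ is compact, simply connected and regular, the preceding discussion applies. The foliation $\F$ is regular, so by the equivalence proposition each $\F_u$ is regular. Consequently $S\to Z=S/\F_3$ is a principal $S^1$-bundle over a compact manifold, and $S\to B$ is a principal $G$-bundle with $G=SU(2)$ or $SO(3)$ over a compact quaternionic Kähler $4$-manifold $B$. By \cite[Theorem 2.1.2]{Boyer:1998sf}, $Z$ carries a Kähler–Einstein metric $g_Z$ of positive scalar curvature. The identification $Z\simeq\mathcal{S}(E)$ shows that $(Z,g_Z)$ is the twistor space of $B$, with complex structure induced by $\Phi_3$ restricted to the horizontal distribution of $\xi_3$. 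The long exact homotopy sequence of $S^1\to S\to Z$, together with $\pi_1(S)=0$, gives $\pi_1(Z)=0$. Likewise, $G\to S\to B$ gives that $\pi_1(B)$ is a quotient of $\pi_0(G)=0$, hence trivial.

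Next, $B$ is a compact, oriented, simply connected, self-dual Einstein $4$-manifold of positive scalar curvature whose twistor space is Kähler. Hitchin's classification \cite{Hitchintwistor} (see also \cite{twistorspaces}) then gives that $B$ is isometric, up to homothety, to $S^4$ or $\C\p^2$, with twistor spaces $Z=\C\p^3$ and $Z=F_{1,2}=SU(3)/T^2$ respectively. The $3$-Sasakian normalization fixes the scalar curvature of $B$, since $g=\pi_B^*g_B+\sum\eta_i\otimes\eta_i$ and $S$ is Einstein with constant $6$. This removes the homothety ambiguity, so $g_B$ is the round or Fubini–Study metric with a fixed constant.

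Finally, I reconstruct $S$ from $B$. Over each candidate there is a homogeneous $3$-Sasakian manifold: $S^7\to S^4$ (the quaternionic Hopf fibration, $G=SU(2)$) and $SU(3)/U(1)\to\C\p^2$ (with the appropriate $U(1)$, $G=SO(3)$). I need to show that $S$ is isometric to this model. The bundle $S\to B$ is the principal $G$-bundle to which the Levi-Civita connection on $E\simeq\Lambda^+TB$ reduces. By the description $E\simeq S\times_G\R^3$, the connection on $S$ with connection forms $\eta_i$ (vertical $\eta_i$, horizontal $\cH$) is exactly this Levi-Civita connection on the $SO(3)$-frame bundle of $E$, possibly lifted to $SU(2)$. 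Hence $S$ is either the $SO(3)$-frame bundle of $\Lambda^+TB$ or a simply connected double cover of it, and the metric $g=\pi_B^*g_B+\sum\eta_i^2$ is determined by $g_B$ and this connection. For $B=S^4$, the $SO(3)$-frame bundle is $\R\p^7$, so simple connectivity forces the $SU(2)$ lift, which is $S^7$. For $B=\C\p^2$, the frame bundle is already simply connected (it is $SU(3)/U(1)$) and $\Lambda^+$ is not spin, so no lift exists.

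The main obstacle is this last step. It requires a careful check that the frame bundles, with their metrics built from $g_B$ and the connection forms $\eta_i$, coincide with the homogeneous models, and an accounting of the simply connected lifts.
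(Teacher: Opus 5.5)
Your three-stage outline is sound and agrees with the route the paper points to. The paper gives no proof of its own: it only cites Friedrich--Kath and remarks that their argument rests on the Hitchin / Friedrich--Kurke classification of compact K\"ahlerian twistor spaces, which is your middle step. (A small slip: $\pi_1(B)=0$ follows from exactness of $\pi_1(S)\to\pi_1(B)\to\pi_0(G)$, not from $\pi_1(B)$ being a quotient of $\pi_0(G)$. In any case it is not needed for Hitchin's theorem.)

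The one genuine gap is in the last step, and it is not quite where you locate it. You assert that the connection $(\eta_1,\eta_2,\eta_3)$ on $S\to B$ induces the Levi-Civita connection on $E\simeq\Lambda^+TB$ ``by the description $E\simeq S\times_G\R^3$''. That description is only an isomorphism of vector bundles and says nothing about connections. This identification is exactly what makes $(S,g)$ determined by $(B,g_B)$ up to the $SO(3)$/$SU(2)$ ambiguity.

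Once you have it, the comparison with the models is automatic. $S^7$ and $SU(3)/U(1)$ are themselves regular $3$-Sasakian over round $S^4$ and Fubini--Study $\C\p^2$ with the same normalization. So both $S$ and the model are the simply connected one among $P_{SO(3)}(\Lambda^+TB)$ and its $SU(2)$-lift, with metric $\pi^*g_B+\sum_i\eta_i^2$. (The isometry from Hitchin's theorem respects the self-dual orientation, so it matches the $\Lambda^+$ bundles and their Levi-Civita connections.)

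To close the gap, you can cite the standard converse of Konishi's construction (see Boyer--Galicki: a regular $3$-Sasakian manifold is the Konishi $SO(3)$-bundle of its quaternionic K\"ahler base or its $Sp(1)$ double cover). Alternatively, argue directly with a local section $\sigma$ of $S\to B$:
\begin{itemize}
\item The forms $\beta_k:=\sigma^*\alpha_k$ are an orthogonal frame of $\Lambda^+$.
\item The Bianchi identity for $\eta$ (equivalently, the paper's formula $d\alpha=2i\alpha\wedge\eta_3-2i\alpha_3\wedge\eta$) gives $d\beta_i=2\sum_{j,k}\epsilon_{ijk}\,\sigma^*\eta_j\wedge\beta_k$.
\item Since $\Lambda^+$ is Levi-Civita parallel and $\nabla^{LC}$ is torsion-free, $d\beta_i=\sum_{j,k}\epsilon_{ijk}\,\theta_j\wedge\beta_k$, where $\nabla^{LC}\beta_i=\sum_{j,k}\epsilon_{ijk}\theta_j\otimes\beta_k$.
\item Hence $a_j:=\theta_j-2\sigma^*\eta_j$ satisfies $\sum_{j,k}\epsilon_{ijk}a_j\wedge\beta_k=0$ for $i=1,2,3$.
\item The map $a\mapsto a\wedge\beta_k$ is, up to the Hodge star, the action of the quaternionic structure $K_k$ on $T^*B$. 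So these three equations combine to $a_1=K_2^2a_1=-a_1$, giving $a=0$, and the two connections agree.
\end{itemize}
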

The twistor spaces associated to these two $3$-Sasakian manifolds are respectively $\C\p^3$ and the flag manifold $F=SU(3)/T^2.$
\begin{rk}
    This construction remains valid in the case where both $S$ and the corresponding twistor space $Z$ may have an orbifold structure.
\end{rk}
\subsection{Proof of Theorem \ref{theo1}}\label{3.4} In this section, we give the proof of Theorem \ref{theo1}. Note that the arguments for the regular case that we will give also work for the quasi-regular case. Indeed, it is known, see \cite{caramello2022introductionorbifolds}, that if $S$ is  an orbifold with an almost free, effective action by a compact
Lie group $G$, then  the quotient space 
$S/G$ has a natural orbifold structure, and,  moreover, forms on $S$ that are basic with respect to  $G$ descend to well-defined forms on $S/G$. Therefore, if a form is basic on $S$ with respect to the $S^1$ action generated by $\xi_3$, then it will induce a well-defined orbifold form on $Z$. \\
\indent Let $(S,g,\xi_i,\eta_i,\Phi_i)$ be a regular $3$-Sasakian manifold of dimension $7$. Consider its cone $C(S)$ with the radius function $r : C(S)\to\R_{>0}$. We fix the Sasakian structure induced by $(\xi_3,\eta_3,\Phi_3)$. We now view $(C(S),J_3,\omega_3)$ as a Calabi-Yau $4$-fold with holomorphic volume form $\Omega_{(4)}:=\frac{1}{2}(\omega_1+i\omega_2)^2$. It is well-known that $\mu:=\frac{1}{2}r^2$ is a moment map for the hamiltonian action of the locally free action generated by $\xi_3$ on $C(S)$. The following lemma will be useful to compute the different charges:
\begin{lemma}
    We have
    $$\lied_{\xi_3}\eta_i=-2\epsilon_{ij3}\eta_j.$$
\end{lemma}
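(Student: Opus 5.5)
We need to prove $\lied_{\xi_3}\eta_i=-2\epsilon_{ij3}\eta_j$. The plan is to use Cartan's formula, $\lied_{\xi_3}\eta_i=\iota_{\xi_3}d\eta_i+d(\iota_{\xi_3}\eta_i)$. By Proposition \ref{trisasakien}, $\eta_i(\xi_3)=\delta_{i3}$ is constant, so the second term vanishes. The lemma therefore reduces to computing $\iota_{\xi_3}d\eta_i$ on $S$. For $i=3$ this is zero by $\iota_{\xi_3}d\eta_3=0$, which matches $\epsilon_{3j3}=0$.

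For $i\neq 3$ we evaluate $d\eta_i$ against the frame given by $\xi_1,\xi_2,\xi_3$ and horizontal vectors. For the vertical part we use the invariant formula
$$d\eta_i(\xi_3,\xi_j)=\xi_3(\eta_i(\xi_j))-\xi_j(\eta_i(\xi_3))-\eta_i([\xi_3,\xi_j])=-\eta_i([\xi_3,\xi_j]).$$
With $[\xi_3,\xi_j]=-2\epsilon_{3jk}\xi_k$ this gives $d\eta_i(\xi_3,\xi_j)=2\epsilon_{3ji}=-2\epsilon_{ij3}$. Hence on vertical vectors $\iota_{\xi_3}d\eta_i=-2\epsilon_{ij3}\eta_j$.

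It remains to check that $\iota_{\xi_3}d\eta_i$ vanishes on $\cH$. Since $\eta_i=\bar g(\xi_i,\cdot)/r^2$ restricted to $S$ is the metric dual of the Killing field $\xi_i$, we have $d\eta_i(X,Y)=2g(\nabla_X\xi_i,Y)=2g(\Phi_iX,Y)$, up to the sign convention fixed by $\Phi_i=\nabla\xi_i$. Then $d\eta_i(\xi_3,X)=2g(\Phi_i\xi_3,X)=2\epsilon_{i3k}\,g(\xi_k,X)$, which is $0$ for horizontal $X$.

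This Killing-field computation also gives an independent check of the vertical values: $d\eta_i(\xi_3,\xi_j)=2\epsilon_{i3j}=-2\epsilon_{ij3}$, consistent with the bracket calculation. An alternative route to the horizontal vanishing is via the cone identity $\omega_i=\tfrac12 d(r^2\eta_i)$ together with $\iota_{\xi_3}\omega_i=\bar g(J_i\xi_3,\cdot)=\bar g(J_iJ_3 r\partial_r,\cdot)$. By the quaternionic relations this is a combination of $\bar g(\xi_k,\cdot)$ and $dr$, so it has no horizontal component.

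The main obstacle is the sign bookkeeping: the convention for $d\eta(X,Y)$ (whether a factor $\tfrac12$ is included), the orientation of $\Phi_i=\nabla\xi_i$, and the bracket sign $[\xi_i,\xi_j]=-2\epsilon_{ijk}\xi_k$. I would fix everything using the bracket relation of Proposition \ref{trisasakien} with the standard formula $d\eta(X,Y)=X\eta(Y)-Y\eta(X)-\eta([X,Y])$, since the horizontal vanishing is insensitive to signs.
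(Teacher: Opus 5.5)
Your proof is correct. It uses the same two ingredients as the paper's one-line argument: the bracket relation $[\xi_i,\xi_j]=-2\epsilon_{ijk}\xi_k$, and the fact that $\eta_i$ is the metric dual of the Killing field $\xi_i$.

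The difference is in organization. The paper's remark amounts to $\lied_{\xi_3}\eta_i=\lied_{\xi_3}(g(\xi_i,\cdot))=g([\xi_3,\xi_i],\cdot)=-2\epsilon_{3ik}\eta_k$, using $\lied_{\xi_3}g=0$. This handles vertical and horizontal directions in one step. You instead go through Cartan's formula and treat $\cV$ and $\cH$ separately. That is longer, but it makes explicit something the terse proof leaves implicit: the bracket alone only fixes the values on $\xi_1,\xi_2,\xi_3$, and it is the Killing property that kills the horizontal components.

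Your cone alternative in fact proves the whole lemma, not just the horizontal vanishing, using only facts stated in the paper. The quaternionic relations give $J_i\xi_3=J_iJ_3(r\partial_r)=-\delta_{i3}r\partial_r+\epsilon_{i3k}\xi_k$, hence $\iota_{\xi_3}\omega_i=-\delta_{i3}r\,dr+r^2\epsilon_{i3k}\eta_k$. On the other hand, $\iota_{\xi_3}\tfrac12 d(r^2\eta_i)=-\delta_{i3}r\,dr+\tfrac{r^2}{2}\iota_{\xi_3}d\eta_i$. Comparing the two gives $\iota_{\xi_3}d\eta_i=2\epsilon_{i3k}\eta_k=-2\epsilon_{ik3}\eta_k$. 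This route needs neither the bracket relation nor the Killing property.
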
 
\begin{proof}
    This follows from the relation $[\xi_i,\xi_j]=-2\epsilon_{ijk}\xi_k$ and the definition of the contact forms. 
\end{proof}
Using Proposition \ref{trisasakien}, we see that $\Omega_{(4)}$ has a charge $q=4i$. Moreover, this proves
\begin{prop}
    The form $\eta=r^2(\eta_1+i\eta_2)$ is a twist form of charge $2i$ on $C(S)$ for the action of $\xi_3$.
\end{prop}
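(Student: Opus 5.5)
We check the three conditions of Definition \ref{twistdef} for $\eta=r^2(\eta_1+i\eta_2)$ on $(C(S),J_3)$, with the circle action generated by $\xi_3$. Horizontality and the charge are quick from Proposition \ref{trisasakien} and the preceding lemma. The only step needing care is the type computation, and even that reduces to the quaternionic relations.

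\emph{Horizontality.} By Proposition \ref{trisasakien}, $\eta_i(\xi_j)=\delta_{ij}$. Hence $\eta(\xi_3)=r^2(\eta_1(\xi_3)+i\eta_2(\xi_3))=0$.

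\emph{Nowhere vanishing.} Since $\eta_1(\xi_1)=1$, we get $\eta(\xi_1)=r^2\neq0$ everywhere on $C(S)$.

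\emph{Charge.} The function $r$ is invariant under $\xi_3$: indeed $\xi_3=J_3(r\partial_r)$ is tangent to the slices $\{r=\text{const}\}$. So $\lied_{\xi_3}r^2=0$. The preceding lemma gives $\lied_{\xi_3}\eta_1=-2\epsilon_{123}\eta_2=-2\eta_2$ and $\lied_{\xi_3}\eta_2=-2\epsilon_{213}\eta_1=2\eta_1$. Therefore
\[
\lied_{\xi_3}\eta=r^2(-2\eta_2+2i\eta_1)=2i\,r^2(\eta_1+i\eta_2)=2i\,\eta .
\]
The charge is $2i$, which is $q/2$ for $q=4i$, the charge of $\Omega_{(4)}$, as Theorem \ref{twisttheorem} requires.

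\emph{Type $(1,0)$ for $J_3$.} This is the main step. Write $r^2\eta_i=\bar g(\xi_i,\cdot)=\bar g(J_i r\partial_r,\cdot)$ on the cone. A $1$-form $\beta$ is of type $(1,0)$ precisely when $\beta(J_3X)=i\beta(X)$ for all $X$. Using that $\bar g$ is $J_3$-invariant and $J_3^{-1}=-J_3$,
\[
r^2\eta_1(J_3X)=\bar g(J_1r\partial_r,J_3X)=-\bar g(J_3J_1r\partial_r,X).
\]
The quaternionic relation $J_3J_1=J_2$ then gives $r^2\eta_1(J_3X)=-r^2\eta_2(X)$. The same computation with $J_3J_2=-J_1$ gives $r^2\eta_2(J_3X)=r^2\eta_1(X)$. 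Combining,
\[
\eta(J_3X)=r^2\bigl(-\eta_2(X)+i\eta_1(X)\bigr)=i\,\eta(X),
\]
so $\eta$ is of type $(1,0)$.

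This computation is where I expect the difficulty to lie, purely in bookkeeping. The sign conventions must be handled consistently: $\omega_i=\bar g(J_i\cdot,\cdot)$, $\xi_i=J_ir\partial_r$, and $J_iJ_j=-\delta_{ij}+\epsilon_{ijk}J_k$. If the signs come out reversed, the form would be of type $(0,1)$ and one would use $\bar\eta$ instead. The charge computation is the quick consistency check, since a $(1,0)$-form of this kind must have charge $+2i$, matching $q/2$ for $q=4i$.
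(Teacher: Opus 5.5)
Your proof is correct and follows the paper's route. Horizontality and non-vanishing come from $\eta_i(\xi_j)=\delta_{ij}$ and $\eta(\xi_1)=r^2$. The $(1,0)$ type comes from $\eta_1(J_3X)=-\eta_2(X)$ via the quaternionic relations. The charge $2i$ comes from the $\xi_3$-invariance of $r^2$ together with the lemma $\lied_{\xi_3}\eta_i=-2\epsilon_{ij3}\eta_j$. You simply spell out the sign bookkeeping ($J_3J_1=J_2$, $J_3J_2=-J_1$) and the charge computation that the paper compresses into a single line.
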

\begin{proof}
    By Proposition \ref{trisasakien}, $\eta$ is horizontal for $\xi_3$ and nowhere-vanishing as $\eta(\xi_1)=r^2$. Note that $J_3\eta_1=-\frac{1}{r^2}\overline{g}(\xi_1,J_3\cdot)=\frac{1}{r^2}g(\xi_2,\cdot)=\eta_2$ and so $\eta$ is $(1,0)$. Since $r^2$ is $S^1$-invariant, we get that $\eta$ is indeed of charge $2i$.
\end{proof}
Theorem \ref{twisttheorem} then provides an $SU(3)$-structure on the quotient $Z=S/\mathcal{F}_3$. Our next purpose is to compute the torsion of this $SU(3)$-structure. To do this, let $$\omega:=\omega_3-\frac{i}{|\eta|^2}\eta\wedge\bar\eta,\,\, ~~~\Omega:=-i\bar\eta\wedge(\bar\eta\cdot(\iota_{\xi_3}\Omega_{(4)}))$$ be the $SU(3)$-structure on $Z$.
First, remark that $|\eta|^2=r^2$ and so is constant on a level set of the action. We establish a relation on the differential of $\eta$. We have
$$d\eta=d(r^2\eta_1)+id(r^2\eta_2)=2(\omega_1+i\omega_2).$$ It is more convenient to write this differential in terms of $\Omega_{(4)}$. For that, remark that $$(d\eta)^2=8\Omega_{(4)}.$$ Contracting successively by $\xi_3$ and by $\bar\eta$ gives 
$$r^2d\eta+2r^2\eta\wedge(d\log r+i\eta_3) =-2i\bar\eta\cdot\iota_{\xi_3}\Omega_{(4)}.$$ With this relation, we can compute $d\omega$ on $S$, i.e on the level set $\{r=1\}$. 
\begin{align*}
    d\omega&=2\Ima(d\eta\wedge\bar\eta)\\
    &=2\Ima(-2i\bar\eta\cdot\iota_{\xi_3}\Omega_{(4)}\wedge\bar\eta+2i\eta\wedge\bar\eta\wedge\eta_3)\\
    &=\Ima(4\Omega)
\end{align*}
We deduce that $W_1=\frac{8}{3},W_3=W_4=0$. For $d\Omega$ one gets
\begin{align*}
    d\Omega=&-i\bigg(d\bar\eta\wedge(\bar\eta\cdot\iota_{\xi_3}\Omega_{(4)})-\bar\eta\wedge d(\bar\eta\cdot\iota_{\xi_3}\Omega_{(4)})\bigg)\\
    &=-i\bigg(2i(\eta\cdot\iota_{\xi_3}\overline{\Omega_{(4)}}+\bar\eta\wedge\eta_3)\wedge(\bar\eta\cdot\iota_{\xi_3}\Omega_{(4)})+\bar\eta\wedge(d\eta\wedge\eta_3-\eta\wedge d\eta_3)\bigg)\\
    &=4(\omega+\frac{i}{2}\eta\wedge\bar\eta)^2-i\eta\wedge\bar\eta\wedge d\eta_3\\
    &=4(\omega^2+i\eta\wedge\bar\eta\wedge\omega)-2i\eta\wedge\bar\eta\wedge\omega\\
    &=4\omega^2+2i\eta\wedge\bar\eta\wedge\omega\\
    &=W_1\omega^2+\frac{W_1}{2}(\omega+\frac{3i}{2}\eta\wedge\bar\eta)\wedge\omega.
\end{align*}
We conclude that $W_2=\frac{W_1}{2}(\omega+\frac{3i}{2}\eta\wedge\bar\eta)$ (one can easily check that $W_2$ is indeed a primitive form) and $W_5=0$. So the structure obtained is LT which proves Theorem \ref{theo1}. \\ 
\subsection{Proof of Theorem \ref{theo2}}\label{3.5}
Here, we give the proof of Theorem \ref{theo2} which is a generalization of Theorem \ref{theo1} and consider a family of $SU(3)$-structures on the twistor space $Z$. As for Theorem \ref{theo1}, it is sufficent to prove it when $S$ is a manifold. Indeed, the proof will rely on showing that the forms we consider descend to the quotient, which means that they are basic with respect to the locally free $S^1$ action generated by $\xi_3$, see \cite[\S3.5]{caramello2022introductionorbifolds}.\\ \indent   We will directly work on $S$ and more precisely on $\cH$ to get an $SU(3)$-structure on the quotient. For that, we introduce the $2$-forms $\alpha_i$ on $S$ by $$2\alpha_i:=d\eta_i-\epsilon_{ijk}\eta_j\wedge\eta_k.$$ These $\alpha_i$ are horizontal and we shall see them as sections of $\Lambda^2\cH^*$. Note that for every $X,Y$ sections of $\cH$ $$\alpha_i(X,Y)=g(\Phi_iX,Y).$$ Consequently, if $\mu_B$ is the Riemannian volume form on $(B,g_B)$ that we pullback to $S$, then  we infer that $$\mu_S=\mu_B\wedge\eta_1\wedge\eta_2\wedge\eta_3, \,\alpha_i\wedge\alpha_j=2\delta_{ij}\mu_B,$$ where $\mu_S$ is the Riemannian volume form of $(S,g)$. \\ \\
For $a,b\in\R$ and $c\in\C$, we define $$\omega_{a,b}:=a\alpha_3-\frac{i}{2}b^2\eta\wedge\bar\eta,~ \Omega_c:=c\alpha\wedge\bar\eta$$ with $\alpha:=\alpha_1+i\alpha_2$. Note that by Proposition \ref{trisasakien}, $\alpha_3, \eta\wedge\bar\eta$ and $\alpha\wedge\bar\eta$ are basic. They therefore descend to orbifold forms on $Z$ \cite[\S3.5]{caramello2022introductionorbifolds}, which we denote by the same symbols. 

\begin{lemma}
    For every $a,b\in\R, c\in\C$ $$\omega_{a,b}\wedge\Omega_c=0.$$
\end{lemma}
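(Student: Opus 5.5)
We need $\omega_{a,b}\wedge\Omega_c=0$, where
$$\omega_{a,b}\wedge\Omega_c = ac\,\alpha_3\wedge\alpha\wedge\bar\eta-\tfrac{i}{2}b^2c\,\eta\wedge\bar\eta\wedge\alpha\wedge\bar\eta .$$
The plan is to show that each of the two summands vanishes separately, for purely algebraic reasons. We work pointwise on $S$ with the horizontal forms $\alpha_i\in\Lambda^2\cH^*$ and the vertical $1$-forms $\eta_1,\eta_2$.

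The second summand is immediate. It contains $\bar\eta$ twice, and $\bar\eta$ is a $1$-form, so $\bar\eta\wedge\bar\eta=0$ and the whole term vanishes. No information about the $\alpha_i$ is needed here.

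For the first summand, we have
$$\alpha_3\wedge\alpha\wedge\bar\eta=(\alpha_3\wedge\alpha_1+i\,\alpha_3\wedge\alpha_2)\wedge\bar\eta .$$
The identity $\alpha_i\wedge\alpha_j=2\delta_{ij}\mu_B$, stated just before the definitions of $\omega_{a,b}$ and $\Omega_c$, gives $\alpha_3\wedge\alpha_1=\alpha_3\wedge\alpha_2=0$. Hence this term also vanishes, and the sum is zero for every $a,b\in\R$ and $c\in\C$.

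The only step that carries real content is the orthogonality $\alpha_3\wedge\alpha_j=0$ for $j\neq 3$, so I would justify it briefly rather than just cite it. The $\alpha_i$ are the Kähler forms of the three almost complex structures $\Phi_i|_{\cH}$ on the $4$-dimensional space $\cH$, with $\alpha_i(X,Y)=g(\Phi_iX,Y)$. By Proposition \ref{trisasakien}, these structures satisfy the quaternionic relations on $\cH$.

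In an adapted orthonormal frame of $\cH$, the three forms $\alpha_1,\alpha_2,\alpha_3$ are, up to sign, the standard basis $e^{12}+e^{34}$, $e^{13}-e^{24}$, $e^{14}+e^{23}$ of self-dual (or anti-self-dual) $2$-forms. For such forms, $\alpha_i\wedge\alpha_j=\langle\alpha_i,\alpha_j\rangle\mu_B$. Expanding any mixed product such as $(e^{12}+e^{34})\wedge(e^{13}-e^{24})$ gives only terms with a repeated covector, so these products are zero. This is the one point where a sign or frame convention could cause trouble, but the vanishing does not depend on the convention, so I expect no real obstacle.

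Since all the forms involved are basic, the identity on $S$ descends to $Z$.
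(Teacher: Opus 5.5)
Your proposal is correct and is essentially the paper's proof: the paper also kills the two terms of $\omega_{a,b}\wedge\Omega_c$ using $\alpha_3\wedge\alpha=0$ and $\bar\eta\wedge\bar\eta=0$. Your frame-level check that $\alpha_3\wedge\alpha_1=\alpha_3\wedge\alpha_2=0$ just spells out the step the paper calls ``easy.''
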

\begin{proof}
    Easy consequence from $\alpha_3\wedge\alpha=0$ and $\bar\eta\wedge\bar\eta=0$.
\end{proof}
\begin{prop}
    For every $a,b\in\R, c\in\C$ $$i\Omega_c\wedge\overline{\Omega_c}=-8|c|^2\mu_Z,$$ $$\omega_{a,b}^3=-6a^2b^2\mu_Z,$$ with $\mu_Z$ the Riemannian volume form of $g_Z$ on $Z$ pulled back on $S$. 
\end{prop}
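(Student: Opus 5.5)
The plan is to verify both identities by direct computation on $S$, at the level set $\{r=1\}$, where $\eta=\eta_1+i\eta_2$. The inputs are the relations $\alpha_i\wedge\alpha_j=2\delta_{ij}\mu_B$ and $\mu_S=\mu_B\wedge\eta_1\wedge\eta_2\wedge\eta_3$. The first step is to identify the pullback of $\mu_Z$. We have $g=\pi_Z^*g_Z+\eta_3\otimes\eta_3$ and $g=\pi_B^*g_B+\sum_i\eta_i\otimes\eta_i$. Hence the horizontal part of $g$ with respect to $\xi_3$ is $\pi_B^*g_B+\eta_1\otimes\eta_1+\eta_2\otimes\eta_2$, and $\pi_Z^*\mu_Z=\mu_B\wedge\eta_1\wedge\eta_2$. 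This is compatible with $\mu_S=\pi_Z^*\mu_Z\wedge\eta_3$ for the orientation implicit in the paper.

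Next, rewrite everything in real terms using $\eta\wedge\bar\eta=(\eta_1+i\eta_2)\wedge(\eta_1-i\eta_2)=-2i\,\eta_1\wedge\eta_2$. Then
$$\omega_{a,b}=a\alpha_3-b^2\eta_1\wedge\eta_2 .$$
The two summands are even forms, so they commute. Moreover $(\eta_1\wedge\eta_2)^2=0$ and $\alpha_3^3=0$, the latter because it is a horizontal $6$-form in a rank $4$ bundle. The binomial expansion therefore reduces to a single term:
$$\omega_{a,b}^3=3a^2\alpha_3^2\wedge(-b^2\eta_1\wedge\eta_2)=-6a^2b^2\,\mu_B\wedge\eta_1\wedge\eta_2=-6a^2b^2\mu_Z .$$

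For $\Omega_c$, write $\Omega_c\wedge\overline{\Omega_c}=|c|^2\,\alpha\wedge\bar\eta\wedge\bar\alpha\wedge\eta$. Moving the $1$-form $\bar\eta$ past the $2$-form $\bar\alpha$ costs no sign, so this equals $|c|^2\,\alpha\wedge\bar\alpha\wedge\bar\eta\wedge\eta$. Next,
$$\alpha\wedge\bar\alpha=\alpha_1^2+\alpha_2^2+i(\alpha_2\wedge\alpha_1-\alpha_1\wedge\alpha_2)=4\mu_B,$$
using $\alpha_1\wedge\alpha_2=0$ and $\alpha_i^2=2\mu_B$. Also $\bar\eta\wedge\eta=2i\,\eta_1\wedge\eta_2$. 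Combining these gives $\Omega_c\wedge\overline{\Omega_c}=8i|c|^2\mu_Z$, hence $i\Omega_c\wedge\overline{\Omega_c}=-8|c|^2\mu_Z$.

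The main point requiring care is not the algebra but sign and orientation bookkeeping. This includes the sign conventions in $\eta\wedge\bar\eta$, the ordering when commuting $1$-forms past $2$-forms, and above all the claim that $\pi_Z^*\mu_Z$ equals $\mu_B\wedge\eta_1\wedge\eta_2$ rather than its negative. The last point is a choice of orientation on $Z$. I would fix it by declaring $\mu_S=\pi_Z^*\mu_Z\wedge\eta_3$, which is consistent with the given decomposition of $\mu_S$. Finally, I would note that all forms involved are basic for $\xi_3$, which was observed earlier in the paper. The identities computed on $S$ therefore descend to $Z$ in the orbifold sense.
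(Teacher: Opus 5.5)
Your proposal is correct and follows essentially the same route as the paper: identify $\pi_Z^*\mu_Z=\mu_B\wedge\eta_1\wedge\eta_2$, rewrite $\eta\wedge\bar\eta=-2i\,\eta_1\wedge\eta_2$, and expand both products using $\alpha_i\wedge\alpha_j=2\delta_{ij}\mu_B$, with the same signs throughout. Your derivation of $\pi_Z^*g_Z=\pi_B^*g_B+\eta_1\otimes\eta_1+\eta_2\otimes\eta_2$ and the orientation convention $\mu_S=\pi_Z^*\mu_Z\wedge\eta_3$ just make explicit what the paper states as a one-line remark.
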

\begin{proof}
First, remark that with these identifications, $\mu_Z=\mu_B\wedge\eta_1\wedge\eta_2$. 
    \begin{align*}
        \Omega_c\wedge\overline{\Omega_c}&=-|c|^2\alpha\wedge\bar\alpha\wedge\eta\wedge\bar\eta\\
        &=2i|c|^2(\alpha_1^2+\alpha_2^2)\wedge\eta_1\wedge\eta_2\\
        &=8i|c|^2\mu_B\wedge\eta_1\wedge\eta_2\\
        &=8i|c|^2\mu_Z\\
        \omega_{a,b}^3&=(a\alpha_3-\frac{i}{2}b^2\eta\wedge\bar\eta)^3\\
        &=-\frac{3a^2b^2i}{2}\alpha_3^2\wedge\eta\wedge\bar\eta\\
        &=-\frac{3a^2b^2i}{2}2\mu_B\wedge(-2i\eta_1\wedge\eta_2)\\
        &=-6a^2b^2\mu_Z
    \end{align*}
\end{proof}
 We deduce that $(\omega_{a,b},\Omega_c)$ defines an $SU(3)$-structure on $Z$ iff $c\neq 0, a\neq 0$ and $b^2=\frac{|c|^2}{a^2}$. Note that the metric induced by $(\omega_{a,b},\Omega_c)$ may not be positive definite. In the following, we describe the parameters $a,b,c$ for which the corresponding metric is positive definite.\\ \indent Let $(e_1,\cdots,e_4)$ be the pullback on $S$ of a local oriented orthonormal coframe on $B$. Then, $(e_1,\cdots,e_4,\eta_1,\eta_2,\eta_3)$ is a local oriented orthonormal coframe on $S$ and so $g$ can be written $$g=e_1^2+\cdots+e_4^2+\eta_1^2+\eta_2^2+\eta_3^2.$$ We can also write $$\mu_B=e_1\wedge e_2\wedge e_3\wedge e_4,$$ $$\alpha_1=e_1\wedge e_2+e_3\wedge e_4,$$ $$\alpha_2=e_1\wedge e_3-e_2\wedge e_4,$$ $$\alpha_3=e_1\wedge e_4+e_2\wedge e_3.$$ Consider $f_1:=e_1+ie_4, f_2:=e_2+ie_3$ so that $\alpha=f_1\wedge f_2$ and so locally, the induced almost complex structure is defined in a way that $f_1,f_2$ and $\bar\eta$ are $(1,0)$-forms. Thus, we get that the metric on $Z$ induced by $\omega_{a,b}$ and $\Omega_c$ is
$$g_{a,b,c}=\frac{a}{2}(f_1\otimes\bar f_1+\bar f_1\otimes f_1+f_2\otimes\bar f_2+\bar f_2\otimes f_2)+b^2(\eta_1\otimes\eta_1+\eta_2\otimes\eta_2).$$ Therefore, $g_{a,b,c}$ is positive definite iff $a>0$. 
\begin{rk}
    In the case where $a<0$, the metric has signature $(2,4)$ and so we get an $SU(1,2)$-structure rather than an $SU(3)$-structure. 
\end{rk}
We introduce the usual moduli of LT structures \cite{Larfors_2010,Larfors_2013}, namely we write $c=abe^{i\gamma}$ for some constant phase $\gamma\in\R$ which can be absorbed in the definiton of $\Omega_c$ that we rename $\Omega_{a,b,\gamma}$. To compute the torsion of ($\omega_{a,b}$,$\Omega_{a,b,\gamma}$), we need the following identities:
\begin{lemma}
    On $S$, the following relations hold:
    $$d\eta=2\alpha-2i\eta\wedge\eta_3$$ and $$d\alpha=2i\alpha\wedge\eta_3-2i\alpha_3\wedge\eta.$$
\end{lemma}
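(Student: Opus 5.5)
The plan is to derive both identities directly on $S$ from the defining relation $2\alpha_i=d\eta_i-\epsilon_{ijk}\eta_j\wedge\eta_k$. All that is needed is the structure of $d$ on the vertical coframe and the relation $d^2=0$. Here $\eta=\eta_1+i\eta_2$, since we are on the level set $r=1$.

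\textbf{First identity.} Expanding the definition gives
\[
d\eta_1=2\alpha_1+2\eta_2\wedge\eta_3,\qquad d\eta_2=2\alpha_2+2\eta_3\wedge\eta_1,\qquad d\eta_3=2\alpha_3+2\eta_1\wedge\eta_2 .
\]
Then
\[
d\eta=2\alpha+2\eta_2\wedge\eta_3+2i\,\eta_3\wedge\eta_1 .
\]
It remains to rewrite the last two terms. We have $-2i\eta\wedge\eta_3=-2i\eta_1\wedge\eta_3+2\eta_2\wedge\eta_3$, and since $2i\eta_3\wedge\eta_1=-2i\eta_1\wedge\eta_3$, this gives $d\eta=2\alpha-2i\eta\wedge\eta_3$.

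\textbf{Second identity.} Apply $d$ to the first identity and use $d^2=0$:
\[
0=2\,d\alpha-2i\,d\eta\wedge\eta_3+2i\,\eta\wedge d\eta_3 .
\]
Substitute the first identity for $d\eta$ and the formula above for $d\eta_3$. The term $\eta\wedge\eta_3\wedge\eta_3$ vanishes. The two $\eta_1\wedge\eta_2$ contributions also drop out, because $\eta\wedge\eta_1\wedge\eta_2=0$ (since $\eta$ is a combination of $\eta_1$ and $\eta_2$). This leaves
\[
d\alpha=2i\,\alpha\wedge\eta_3-2i\,\eta\wedge\alpha_3 ,
\]
and since $\eta\wedge\alpha_3=\alpha_3\wedge\eta$ (a $1$-form commutes with a $2$-form), this is the claim.

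\textbf{Main obstacle.} The computation is mostly sign bookkeeping with the Levi-Civita symbol and the orientation of wedge products. The one conceptual point to check is that $d^2=0$ determines $d\alpha$ completely. It does: the $d^2=0$ relation is an exact equation in which $d\alpha$ appears with coefficient $2$, so it can be solved for $d\alpha$ directly, with no separate analysis of horizontal or vertical components. As a consistency check, contract the result with $\xi_3$. Using $\iota_{\xi_3}\alpha=0$ and $\iota_{\xi_3}\eta=0$, one gets $\iota_{\xi_3}d\alpha=-2i\alpha$. Because $\iota_{\xi_3}\alpha=0$, Cartan's formula gives $\lied_{\xi_3}\alpha=\iota_{\xi_3}d\alpha$, so this says $\lied_{\xi_3}\alpha=-2i\alpha$. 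That charge should agree with the charge of $\eta$ coming from the Lemma on $\lied_{\xi_3}\eta_i$, and confirming the agreement verifies the conventions.
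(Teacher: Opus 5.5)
Your argument is correct and is essentially the paper's own proof: you expand $d\eta_1,d\eta_2$ from the definition of $\alpha_i$, obtain $d\alpha=i\,d(\eta\wedge\eta_3)$ from $d^2=0$, and drop the $\eta\wedge\eta_1\wedge\eta_2$ term. There is one sign slip, and it is only in your optional sanity check: $\iota_{\xi_3}(\alpha\wedge\eta_3)=+\alpha$, so $\lied_{\xi_3}\alpha=\iota_{\xi_3}d\alpha=+2i\alpha$. That matches the charge $2i$ of $\eta$, as it must for $\alpha\wedge\bar\eta$ to be basic, whereas your stated value $-2i\alpha$ would make the check fail rather than confirm it.
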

\begin{proof}
Using the definitions of the $\eta_i$ and $\alpha_j$, one gets  
    \begin{align*}
        d\eta&=d(\eta_1+i\eta_2)\\
        &=2\alpha_1+2\eta_2\wedge\eta_3+i(2\alpha_2-2\eta_1\wedge\eta_3)\\
        &=2\alpha-2i\eta\wedge\eta_3\\
        d\alpha&=id(\eta\wedge\eta_3)\\
        &=i(d\eta\wedge\eta_3-\eta\wedge d\eta_3)\\
        &=i(2\alpha\wedge\eta_3-2\alpha_3\wedge\eta)
    \end{align*}
\end{proof}
Similarly as for Theorem \ref{theo1}, we compute $d\omega_{a,b}$ and $d\Omega_{a,b,\gamma}$ to get
$$d\omega_{a,b}=2(a+b^2)\Ima\bigg(\frac{e^{-i\gamma}}{ab}\Omega_{a,b,\gamma}\bigg)$$ and $$d\Omega_{a,b,\gamma}=c(8\mu_B-2i\alpha_3\wedge\eta\wedge\bar\eta).$$ These expressions imply that $$W_1=\frac{4e^{i\gamma}(a+b^2)}{3ab},$$ $$W_2=\frac{4e^{i\gamma}(2b^2-a)}{3ab}\bigg(\omega_{a,b}+\frac{3ib^2}{2}\eta\wedge\bar\eta\bigg)$$ and $$W_3=W_4=W_5=0.$$ \indent From the expressions of $W_1$ and $W_2$, we directly get Corollary \ref{cor1}.
\begin{rk}
    The almost complex structure induced by $\Omega_{a,b,\gamma}$ is never integrable as $W_1$ and $W_2$ cannot be both zero simultaneously. 
\end{rk}
\begin{rk}
    Theorem \ref{theo1} is a particular case of Theorem \ref{theo2} where we have chosen $a=b=1$ and $\gamma=0$. 
\end{rk}
\subsection{Orientifold sources}\label{3.6}

Let us set $\gamma=\frac{\pi}{2}$ for simplicity and use Theorem \ref{theo2} to obtain
\[
d\omega_{a,b}= -\frac{2(a+b^2)}{ab}~\!\text{Re}(\Omega_{a,b})~;~~
d W_2=\frac{8i(2b^2-a)^2}{3a^2b^2}\text{Re}(\Omega_{a,b})
\]
Moreover, for an LT structure that in addition obeys $d W_2\propto \text{Re}(\Omega_{a,b})$, 
it can be shown that \cite{Lust_2005}
\[
d W_2=\frac{i}{8}|W_2|^2\text{Re}(\Omega_{a,b})
~,\]
where $|W_2|^2:=(W_2)^{mn}(\overline{W}_2)_{mn}$. 
An AdS$_4$ solution without orientifold O6-planes must obey the constraint \cite{Koerber_2008} 
\[
3|W_1|^2-|W_2|^2\geq 0~.
\]
Violation of this condition signals the presence of (smeared) orientifolds, with \cite{DeWolfe_2005} being the prototypical example. In string theory, orientifolds arise by quotienting by a symmetry that combines an involution of the internal manifold with reversal of the string orientation. In the supergravity approximation, the fixed-point loci of the involution act as localized sources, which in the smeared case are replaced by smooth distributions. 
Rewriting the AdS$_4$ constraint in terms of the moduli, we eventually get that 
\[
a\geq b^2\geq\frac15 a>0
~.\]

\section{Examples}\label{4}
\subsection{Regular examples}\label{4.1}
The recipe explained in the last section gives a systematic method to endow any $S^1$ quotient of a $3$-Sasakian manifold with an LT structure. Using the classification of compact regular $3$-Sasakian manifolds given by Theorem \ref{theoremclassification}, we recover the examples of $\C\p^3$ and the flag manifold $SU(3)/T^2$.\\ 
\indent In \cite{EellsSalamon}, the authors constructed two almost complex structures $J_1$ and $J_2$ on the twistor space of any oriented $4$-dimensional Riemannian manifold. We shall relate this construction to ours. Let $(S,g,\xi_i,\eta_i,\Phi_i)$ be a regular $3$-Sasakian manifold with twistor space $Z$ and quaternionic Kähler base $B$. Consider the quotient maps $\pi_1 : S\to Z$, $\pi_2 : Z\to B$ and $\pi:=\pi_2\pi_1 : S\to B$. We introduce the contact distribution $\cD=\ker\eta_3$ and recall that $\cH=\ker\eta_1\cap\ker\eta_2\cap\ker\eta_3$. In particular, we have the following identification: $$\cD=\cH\oplus\R\xi_1\oplus\R\xi_2.$$ \indent Moreover, we know that the differential of $\pi_1$ gives an isomorphism between $TZ$ and $\cD$. Similarly, the map $\pi_2$ provides a splitting of $TZ=T^V\oplus T^H$ into its vertical and horizontal components. Putting all of these identifications together, we get that pointwise $$T^H\simeq\cH\simeq TB,\qquad T^V\simeq\R\xi_1\oplus\R\xi_2.$$\indent  On $\cD$, the tensor $\Phi_3$ defines a CR structure which induces an almost complex structure on $Z$ which corresponds to the integrable complex structure $J_1$ in \cite{EellsSalamon}. This $J_1$ corresponds to requiring $\xi_1-i\xi_2$ to be $(1,0)$. Eells and Salamon also consider an almost complex structure $J_2$ which acts as $J_1$ on $T^H$ and with an opposite sign on $T^V$. This implies that the form $\bar\eta$ is $(1,0)$ for this complex structure $J_2$. Moreover, the local forms $f_1$ and $f_2$ are also $(1,0)$ with respect to $J_2$. Thus the almost complex structure induced by our construction coincides with the Eells–Salamon structure $J_2$.
\subsection{Quasi-regular and orbifold examples}\label{4.2} In this section, we focus on the quasi-regular and orbifold cases. 

Recall that any compact $3$-Sasakian manifold is automatically quasi-regular and it implies that the twistor space $Z$ is in general an orbifold rather than a manifold. We give here two families of examples. \\
\indent The first family consists of quasi-regular toric $3$-Sasakian manifolds using torus actions. We recall here the construction. Consider the sphere $S^{4n+3}$ in the quaternionic space $\Hq^{n+1}.$ Consider an embedded torus $T^k$ with $k\leq n+1$ in $Sp(n+1)$ and an integral matrix $P=(a^i_j)\in\mathcal{M}_{k,n+1}(\Z)$, $T^k$ then acts on $\Hq^{n+1}$ by  $$(\phi_1,\cdots,\phi_k)\in T^k\mapsto A_P:=\begin{pmatrix}
    \prod_{i=1}^k\phi_i^{a^i_1}&\cdots&0\\
    \vdots&\ddots&\vdots\\
    0&\cdots&\prod_{i=1}^k\phi_i^{a^i_{n+1}}
\end{pmatrix}\in Sp(n+1).$$ Under a certain condition called \textit{admissibility} satisfied by the matrix $P$ in \cite[Definition 7.1.7]{Boyer:1998sf}, Theorem \ref{reductiontheorem} implies the following result:
\begin{prop}\cite[Theorem 6.3.1, 7.1.2]{Boyer:1998sf}
    Let $P$ be an admissible $k\times (n+1)$ integral matrix and denote $T^k_P$ the action of $A_P$ on $\Hq^{n+1}$. Then, the quotient $S(P)=S^{4n+3}/\!\!/T^k_P$ is a smooth $3$-Sasakian manifold.
 \end{prop}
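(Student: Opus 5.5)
The statement is the combination of two results of Boyer--Galicki: a smoothness criterion for $3$-Sasakian quotients of spheres by tori, and the verification that admissibility supplies its hypotheses. My plan is to derive it from Theorem \ref{reductiontheorem}, applied to $S=S^{4n+3}\subset\Hq^{n+1}$ with its standard $3$-Sasakian structure (the cone is flat $\Hq^{n+1}$) and $G=T^k_P$.

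\textbf{Step 1: the action is by $3$-Sasakian isometries.} Let $T^k$ act through $A_P$, diagonally by unit complex numbers acting on each quaternionic coordinate on the side commuting with the hyperkähler structure. Then $T^k_P\subset Sp(n+1)$ preserves the flat metric and $J_1,J_2,J_3$. It also commutes with dilations, so it preserves $r\partial_r$ and hence each $\xi_i=J_i(r\partial_r)$. Consequently it acts on $S^{4n+3}$ by $3$-Sasakian isometries, and it is connected and compact.

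\textbf{Step 2: the moment map.} The hyperkähler moment map of the diagonal action is explicit:
$$\mu^i_{\mathbb H}(u)=\sum_{l=1}^{n+1} a^i_l\,\bar u_l\, \mathbf{i}\, u_l,\qquad i=1,\dots,k,$$
with values in $\Im\Hq\simeq\R^3$. Its restriction to the sphere is the $3$-Sasakian moment map $\mu_S$, which is the unique one by Proposition 6.1.2 of \cite{Boyer:1998sf}.

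\textbf{Step 3: smoothness of the zero level and freeness.} This is where admissibility enters, and I expect it to be the main obstacle. The aim is to show that $0$ is a regular value of $\mu_S$ on the sphere and that $T^k_P$ acts freely on $\mu_S^{-1}(0)$.

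For regularity, $d\mu$ fails to be surjective at a point $u$ exactly when the stabilizer Lie algebra at $u$ is nonzero. The infinitesimal stabilizer at $u$ is spanned by the rows of $P$ restricted to the coordinates with $u_l\neq 0$. One therefore shows that on the level set $\mu=0$ at least $k+1$ coordinates are nonzero. If fewer coordinates were nonzero, the equations $\sum_l a^i_l\,\bar u_l\mathbf{i}u_l=0$ would force a nontrivial relation among columns, which the minor condition rules out. Admissibility requires every $k\times k$ minor to be nonzero, and this then gives injectivity of the infinitesimal action, hence regularity.

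For freeness, a point of $\mu^{-1}(0)$ with stabilizer containing a finite element $\phi$ would need $\prod_i\phi_i^{a^i_l}=1$ for every $l$ with $u_l\neq0$. The gcd conditions in admissibility on the $k\times k$ minors over each set of $k+1$ columns are designed so that this forces $\phi=1$. Checking this carefully is the combinatorial core; I would argue via the Smith normal form of the relevant $k\times(k+1)$ submatrices.

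\textbf{Step 4: conclusion.} With these hypotheses in place, Theorem \ref{reductiontheorem} gives a smooth $3$-Sasakian structure on $\mu_S^{-1}(0)/T^k_P=S(P)$, of dimension $4(n-k)+3$.
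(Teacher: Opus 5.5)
Your proposal is correct and follows the route the paper indicates; the paper gives no proof of its own and only cites Boyer--Galicki, where the result is obtained exactly as you do. One applies the $3$-Sasakian reduction theorem to $S^{4n+3}$, uses the nonvanishing of all $k\times k$ minors to force at least $k+1$ nonzero coordinates on $\mu^{-1}(0)$ (hence trivial infinitesimal stabilizers and regularity of $0$), and uses the gcd condition on each $(k+1)$-column submatrix to get surjectivity $\Z^{k+1}\to\Z^k$ (hence freeness). One small wording fix: the infinitesimal stabilizer at $u$ is the set of $\zeta\in\R^k$ with $\zeta^{T}P_I=0$, where $I$ is the support of $u$, rather than the span of the restricted rows of $P$.
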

 In particular, for the case $k=n-1$, it produces a family of $7$ dimensional $3$-Sasakian manifolds provided $P$ is well-chosen. One can prove that for any $k>1$, there exists an admissible $P\in\mathcal{M}_{k,k+2}(\Z)$ that gives a genuine $3$-Sasakian manifold of dimension $7$, which we denote by $S(k)$. The advantage of considering this family is justified by the following proposition: 
 \begin{prop}\cite[Proposition 7.5.11]{Boyer:1998sf} Let $k>1$ be an integer. Then, the second Betti number $b_2$ of $S(k)$ is equal to $k$. 
\end{prop}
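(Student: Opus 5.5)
Since the statement is cited from \cite[Proposition 7.5.11]{Boyer:1998sf}, the plan is to reconstruct Boyer--Galicki's argument. The idea is to compute $b_2(S(k))$ through the orbifold fibration over the quaternionic Kähler base and then reduce to the topology of a toric hyperkähler quotient. Concretely, $S(k)=S^{4k+7}/\!\!/T^k_P$ with $P\in\mathcal{M}_{k,k+2}(\Z)$ admissible, so $n+1=k+2$. The hyperkähler cone $C(S(k))$ is the hyperkähler quotient $\Hq^{k+2}/\!\!/\!\!/T^k$, minus the cone point. Here I use that Theorem \ref{reductiontheorem} is compatible with passing to cones, because $\mu_S$ is the restriction of the homogeneous hyperkähler moment map.

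\textbf{Step 1 (setup).} Consider the level set $N:=\mu_S^{-1}(0)\subset S^{4k+7}$, on which $T^k$ acts freely by admissibility. This gives a principal $T^k$-bundle $N\to S(k)$. The dimension count $\dim N=4k+7-3k=k+7$ agrees with $\dim S(k)+k=7+k$.

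\textbf{Step 2 (connectivity of $N$).} I will show that $N$ is $2$-connected; at least $\pi_1(N)=\pi_2(N)=0$ is needed. The zero set of the hyperkähler moment map is an intersection of $3k$ real quadrics in $S^{4k+7}$. I would try the Morse-theoretic argument used for hyperkähler quotients (Bielawski--Dancer / Konno), or the Boyer--Galicki argument via the quadratic moment map. The function $|\mu|^2$ on the sphere has critical sets of large index, so $N\hookrightarrow S^{4k+7}$ is highly connected. Admissibility (all $k\times k$ minors nonzero) rules out degenerate low-index critical sets. This step is the main obstacle: controlling the Morse indices of $|\mu|^2$ and checking that every critical stratum other than $N$ has index at least $4$. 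I would try first to reduce to torus-invariant subspaces $\Hq^I$ and count dimensions explicitly using the minors of $P$.

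\textbf{Step 3 (homotopy sequence).} From the fibration $T^k\to N\to S(k)$ and $\pi_1(N)=\pi_2(N)=0$, we get $\pi_2(S(k))\cong\pi_1(T^k)=\Z^k$. We also get $\pi_1(S(k))\cong\pi_0(T^k)=0$, so $S(k)$ is simply connected. Hurewicz then gives $H_2(S(k);\Z)\cong\Z^k$, hence $b_2=k$.

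As a consistency check, I would compare with the Leray--Serre spectral sequence of $S(k)\to B$. Its generic fibre is $SO(3)$ or $SU(2)$, which has vanishing rational $H^2$. So $b_2(S(k))=b_2(B)$, and $B$ is the toric quaternionic Kähler orbifold whose second Betti number is $k$. This matches the computation above.
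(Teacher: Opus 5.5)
The overall structure of your argument is right, but Step~2 is a genuine gap. Steps~1 and~3 are correct: once you know $\pi_1(N)=\pi_2(N)=0$, the homotopy sequence of $T^k\to N\to S(k)$ and Hurewicz give $\pi_1(S(k))=0$ and $H_2(S(k);\Z)\cong\Z^k$. The paper itself only cites Boyer--Galicki, so there is no in-paper proof to compare with.

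Step~2 carries the whole content of the proposition, and you do not prove it; what you give is a programme, not an argument. Two of its ingredients are unjustified.

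\textbf{First}, the Morse-theoretic input you rely on (minimal degeneracy, equivariant perfection, index control for $|\mu|^2$) is Kirwan's theory for a \emph{Kähler} moment map. For a hyperkähler moment map $\mu=(\mu_{\R},\mu_{\C})$ none of it is automatic. You would have to show by hand that the restriction of $|\mu|^2$ to $S^{4k+7}$ is Morse--Bott (or otherwise tractable), and that every critical set other than $N$ has index at least $4$.

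\textbf{Second}, admissibility does not do what you claim. Nonvanishing of the $k\times k$ minors is what makes $0$ a regular value and the $T^k$-action on $N$ locally free; the gcd conditions then give freeness. It says nothing about the other critical sets.

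If you set $v_\alpha=q_\alpha i\bar q_\alpha\in\R^3$ and start the computation, you find the following at a critical point with $|\mu|>0$. The Hessian has no cross terms between a vanishing coordinate $q_\alpha$ and the rest, and each vanishing $q_\alpha$ contributes at least $2$ negative directions. So critical sets with at most $k$ nonzero coordinates are harmless as far as the index goes. Those with $k+1$ or $k+2$ nonzero coordinates need a real analysis, and that is missing.

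Your ``consistency check'' is actually the more economical route, but as written it is circular. The Leray--Serre argument is correct: the fibres of $S(k)\to B$ are rational homology $3$-spheres, so $b_2(S(k))=b_2(B)$. But the assertion that $B$ has $b_2=k$ is just the statement moved to $B$.

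You can turn it into a proof using the induced action of $T^2=T^{k+2}/T^k$ on $B$:
\begin{enumerate}[(i)]
\item Via the $v_\alpha$, identify $\mu^{-1}(0)/T^{k+2}$ with $\ker P\otimes\R^3$. Then $B/T^2$ becomes the closed disc of trace-one positive semidefinite $2\times 2$ Gram matrices.
\item Nonvanishing of the $k\times k$ minors shows that no two $v_\alpha$ vanish simultaneously.
\item The $T^2$-fixed points are exactly the $k+2$ points $\{q_\alpha=0\}$; every other orbit has positive dimension.
\item Hence $\chi(B)=\chi(B^{T^2})=k+2$. Since $b_1(B)=b_3(B)=0$ (positive Ricci curvature and rational Poincaré duality), $b_2(B)=k$.
\end{enumerate}
To close the argument you need either this, or an actual proof (or citation) that $N$ is $2$-connected.
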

This family shows that unlike the regular case, the quasi-regular $3$-Sasakian geometry is much richer and offers a lot of possible examples where our main Theorems \ref{theo1} and \ref{theo2} can be applied. In particular, Corollary \ref{cor1} shows that the associated twistor spaces can be endowed with nearly Kähler structures. \\ 
\indent Another possibility to construct examples is by 
\begin{theo}\cite[Theorem 2.3.5]{Boyer:1998sf} Let $B$ be a quaternionic Kähler orbifold with positive scalar curvature. Then, there is a principal $SO(3)$ orbibundle $S$ over $B$ which admits an orbifold $3$-Sasakian structure.
\end{theo}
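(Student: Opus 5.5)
The plan is to realise $S$ as the frame bundle of the rank $3$ subbundle $E\subset\End TB$ and then to build a hyperkähler metric on the cone over it; this is the Konishi/Swann construction. By the definition of a $3$-Sasakian structure, a hyperkähler cone is exactly what is needed. Throughout I write $\Omega_1,\Omega_2,\Omega_3$ for the local fundamental $2$-forms $g_B(I\cdot,\cdot),g_B(J\cdot,\cdot),g_B(K\cdot,\cdot)$ of $B$. I would work first in a local uniformizing chart $\tilde U\to U=\tilde U/\Gamma$ of $B$, where everything is a genuine manifold, and only at the end check $\Gamma$-equivariance so that the objects glue to orbifold objects.

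\textbf{Step 1: the bundle.} Let $E\subset\End TB$ be the quaternionic subbundle, with the fibre metric making $I,J,K$ orthonormal. The Levi-Civita connection of $g_B$ preserves $E$, so it induces an $\mathfrak{so}(3)$-connection on the oriented orthonormal frame bundle $S:=SO(E)$. On a chart $\tilde U$ the group $\Gamma$ acts on $E|_{\tilde U}$ by bundle maps commuting with the connection, hence on $SO(E|_{\tilde U})$. These local bundles glue to a principal $SO(3)$ orbibundle $\pi:S\to B$, following the standard construction of frame orbibundles as in \cite{caramello2022introductionorbifolds}.

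\textbf{Step 2: tensors on the cone.} Write the connection form as $\theta=(\theta_1,\theta_2,\theta_3)$ in the standard basis of $\mathfrak{so}(3)\simeq\R^3$. I set $\eta_i:=\lambda\theta_i$ for a constant $\lambda>0$ to be fixed later, and $\xi_i$ for the fundamental vector fields, normalised so that $\eta_i(\xi_j)=\delta_{ij}$. Tautologically, a point $p\in S$ is a frame $(I_p,J_p,K_p)$ of $E$, so it gives global $2$-forms $\pi^*\Omega_i(p)$ on the horizontal distribution. On $C(S)=S\times\R_{>0}$ I define
$$\bar g=dr^2+r^2\Big(\pi^*g_B+\sum_i\eta_i^2\Big),\qquad \omega_i:=\tfrac12 d(r^2\eta_i).$$
The forms $\omega_i$ are closed by construction. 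Expanding $d\eta_i$ with the structure equation $d\theta_i=-\tfrac12\epsilon_{ijk}\theta_j\wedge\theta_k+F_i$ shows that $\omega_i$ has three kinds of terms: $r\,dr\wedge\eta_i$, vertical terms $\epsilon_{ijk}\eta_j\wedge\eta_k$, and the horizontal curvature term $\lambda r^2F_i$.

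\textbf{Step 3: the key curvature input.} For a quaternionic Kähler orbifold of dimension $4n$ and scalar curvature $s$, the curvature of $E$ is a fixed multiple of the fundamental forms:
$$F_i=\nu\,\Omega_i,\qquad \nu=\frac{s}{16n(n+2)}\ \text{ up to sign and normalisation conventions}.$$
This identity holds pointwise on charts, so it passes to the orbifold setting. For $n=1$ it is replaced by the hypothesis that $B$ is self-dual Einstein, and the same identity holds for $\Lambda^+$.

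This is the step I expect to be the main obstacle, for two reasons. First, the signs and normalisations must be tracked carefully. Second, it is precisely here that positivity of $s$ is used: since $\nu>0$, I can choose $\lambda$ with $\lambda\nu=1$, and then the horizontal part of $\omega_i$ is exactly $r^2\Omega_i$ with the correct sign. If $s<0$, the same computation would force a metric of indefinite signature.

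\textbf{Step 4: concluding hyperkählerness.} With this choice of $\lambda$, define $J_i$ by $\omega_i=\bar g(J_i\cdot,\cdot)$. On the horizontal part these are $I,J,K$ at the tautological frame. On $\Span\{r\partial_r,\xi_1,\xi_2,\xi_3\}$ they act as left multiplication by $i,j,k$ on $\Hq$. So the $J_i$ are orthogonal and satisfy the quaternionic relations. Hitchin's lemma then applies: three closed $2$-forms defining almost complex structures that are orthogonal for one metric and satisfy the quaternionic relations force the $J_i$ to be integrable and parallel. Hence $(C(S),\bar g)$ is hyperkähler, and $(S,g)$ at $r=1$ is $3$-Sasakian with Reeb fields $\xi_i=J_i(r\partial_r)$.

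All the data are built from $g_B$, $E$ and the connection. They are therefore invariant under the local uniformizing groups, which gives an orbifold $3$-Sasakian structure on the principal $SO(3)$ orbibundle $S$.
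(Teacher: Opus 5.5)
The paper gives no proof of this statement; it only quotes \cite[Theorem 2.3.5]{Boyer:1998sf}. The argument behind that citation is the Konishi frame-bundle construction you outline, and your bundle, orbifold gluing and use of Hitchin's lemma are fine. There is, however, a genuine normalisation error in Steps 3--4, at exactly the point where positivity of $s$ should enter.

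The fibre scale $\lambda$ is \emph{not} a free parameter. With $\eta_i=\lambda\theta_i$ and your structure equation,
\[
\omega_1=dr\wedge(r\eta_1)-\frac{1}{2\lambda}\,(r\eta_2)\wedge(r\eta_3)+\frac{\lambda r^2}{2}F_1 ,
\]
and $dr,r\eta_1,r\eta_2,r\eta_3$ is orthonormal for $\bar g$. So $J_1$ preserves $\Span\{r\partial_r,\xi_1,\xi_2,\xi_3\}$, and on it $J_1^2=-1$ only if $|\lambda|=\tfrac12$. Equivalently, your normalisation of $\xi_i$ gives $[\xi_i,\xi_j]=\lambda^{-1}\epsilon_{ijk}\xi_k$, whereas a $3$-Sasakian structure needs $[\xi_i,\xi_j]=-2\epsilon_{ijk}\xi_k$. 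Hence $|\lambda|$ is fixed by the structure constants of $\mathfrak{so}(3)$, and only for that value is the vertical block the quaternionic structure of $\Hq$. With $|\lambda|$ pinned, your horizontal condition $\lambda\nu=1$ becomes a condition on $s$, which fails for a general $g_B$. You have one parameter and two constraints, so for generic $s>0$ the $J_i$ you define are not almost complex structures.

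The missing step is to rescale the base instead. A homothety $g_B\mapsto c\,g_B$ preserves the quaternionic Kähler (resp.\ self-dual Einstein) condition. It leaves $E$, its Levi-Civita connection and the curvature $F_i$ unchanged, but it replaces $\Omega_i$ by $c\,\Omega_i$ and $s$ by $s/c$. So take
\[
g=c\,\pi^*g_B+\sum_i\eta_i^2,\qquad |\lambda|=\tfrac12 ,
\]
with $c$ chosen so that $c\,g_B$ has scalar curvature $16n(n+2)$, the scalar curvature of the base of any $(4n+3)$-dimensional $3$-Sasakian orbifold. That is, $c=s/(16n(n+2))$, the quantity you called $\nu$. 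This is allowed because the statement only asks for an $SO(3)$ orbibundle over $B$, not one inducing the given metric $g_B$.

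Positivity of $s$ is what makes $c>0$. Equivalently, it makes the vertical and horizontal quaternionic triples have the same orientation, so that $J_1J_2=J_3$ holds on all of $TC(S)$. It is not used to choose the sign of $\lambda$: replacing $\lambda$ by $-\lambda$ changes every $\omega_i$ to $-\omega_i$ and cannot cure an orientation mismatch. After this correction, Step 4 goes through as you wrote it.
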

Knowing that the quaternionic Kähler property is equivalent to the positive self-dual Einstein property in dimension $4$, Galicki and Lawson proved that the weighted projective spaces carry such metrics in \cite{Galicki1988}. More precisely, let $p,q,r$ be three coprime positive integers and consider the $S^1$ action on $\C^3$ given by $$\lambda\cdot(z_1,z_2,z_3):=(\lambda^pz_1,\lambda^qz_2,\lambda^rz_3).$$ The quotient $\C\p^2(p,q,r)$ has an orbifold structure and satisfies
\begin{theo}\label{weigthedprojectivespace}\cite[Theorem 4.26]{Galicki1988} Let $q\leq p$ be positive coprime integers with $p+q$ odd. Then, $\C\p^2(2p,p+q,p+q)$ carries a positive self-dual Einstein metric.
\end{theo}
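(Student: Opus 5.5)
The plan is to obtain the metric by quaternionic Kähler (equivalently $3$-Sasakian) reduction, following Galicki and Lawson, and then identify the resulting $4$-dimensional leaf space with $\C\p^2(2p,p+q,p+q)$.

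First, start from the round $3$-Sasakian sphere $S^{11}\subset\Hq^3$ and let $S^1\subset Sp(3)$ act diagonally with weights $(p,p,q)$, that is, $\lambda\mapsto \mathrm{diag}(\lambda^p,\lambda^p,\lambda^q)$ acting on the right. This is a $3$-Sasakian isometric action, so it has a unique $3$-Sasakian moment map $\mu_S:S^{11}\to\R^3$, which can be written explicitly as a weighted sum of the quadratic forms $\bar u_a\, i\, u_a$. Next, check that $0$ is a regular value and that $S^1$ acts locally freely on $\mu_S^{-1}(0)$. This amounts to showing that no point of $\mu_S^{-1}(0)$ lies in a single coordinate quaternionic line, and this is where positivity of the weights enters. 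The orbifold version of Theorem \ref{reductiontheorem} then gives a $7$-dimensional $3$-Sasakian orbifold $S(p,p,q)$. The hypotheses $\gcd(p,q)=1$ and $p+q$ odd are used here to control the isotropy groups: they should ensure the action is effective and that the induced $SO(3)$ (rather than $SU(2)$) action on $S(p,p,q)$ has the expected finite stabilizers.

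Second, pass to the leaf space $B=S(p,p,q)/\F$. By the discussion in Section \ref{3.2} and the $3$-Sasakian/quaternionic Kähler correspondence of \cite{Boyer:1998sf}, $B$ is a compact quaternionic Kähler orbifold of positive scalar curvature, since the scalar curvature is fixed by the Einstein constant of the $3$-Sasakian metric. In real dimension $4$, quaternionic Kähler is defined as self-dual Einstein, so $B$ carries a positive self-dual Einstein orbifold metric. Equivalently, $B$ is the quaternionic Kähler quotient $\Hq\p^2/\!\!/S^1$.

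Third, and this is the main obstacle, identify $B$ as an orbifold with $\C\p^2(2p,p+q,p+q)$. The approach I would try first uses the leftover symmetry. The torus of $Sp(3)$ commuting with the weighted $S^1$, together with the $U(2)$ rotating the two equal-weight coordinates, descends to an isometric action of a $2$-torus on $B$. This makes $B$ a toric $4$-orbifold, whose orbit space is a polygon labelled by isotropy data. I would compute that polygon by evaluating the stabilizers along the strata where some quaternionic coordinates vanish in $\mu_S^{-1}(0)$. This should give a triangle with orbifold labels $2p,\ p+q,\ p+q$; the factor $2p$ comes from the $\pm1$ ambiguity in $\Hq$ combined with weight $p$. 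A toric orbifold with such a triangle and these labels is $\C\p^2(2p,p+q,p+q)$, and the coprimality and parity conditions are exactly what make the three labels pairwise compatible with a weighted projective plane rather than a quotient of one. As a cross-check, I would compare orbifold Euler characteristics and the orders of the local uniformizing groups at the three fixed points. Combining this identification with the second step proves the theorem.
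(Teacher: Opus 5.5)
The paper gives no proof of this statement; it quotes it from Galicki--Lawson. Your outline is essentially their argument: quaternionic Kähler (equivalently $3$-Sasakian) reduction of $\Hq\p^2$ by the circle of weights $(p,p,q)$, followed by identification of the $4$-dimensional quotient. The strategy is sound and the numbers you predict are right. However, you misplace where the hypotheses enter and misdescribe the toric data, and both need fixing when you write it out.

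\textbf{Where the hypotheses enter.} None of them is used in Step 1.
\begin{itemize}
\item For any nonzero weights, the circle acts locally freely on $\mu_S^{-1}(0)$ and $0$ is a regular value. Indeed, a point $(u_1,0,0)$ has $\mu_S=p_1\bar u_1 i u_1\neq 0$, so positivity plays no role.
\item Coprimality only removes a trivially acting $\Z_{\gcd(p,q)}$.
\item Parity matters only in Step 3. If $p,q$ are both odd, then $(\lambda,h)=(-1,-1)\in S^1\times Sp(1)$ acts trivially on the level set. All isotropy orders then halve, and the quotient is $\C\p^2(p,\frac{p+q}{2},\frac{p+q}{2})$. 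That is still a weighted projective plane, not a quotient of one, so parity decides which weights appear rather than "weighted projective plane versus quotient".
\item For $p+q$ odd, $-1\in Sp(1)$ acts nontrivially on $S(p,p,q)$. The generic fibres over $B$ are therefore $SU(2)$-orbits, not $SO(3)$-orbits as you wrote.
\end{itemize}

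\textbf{The toric data in Step 3.} The loci where a quaternionic coordinate vanishes are not edges; they are the three $T^2$-fixed points.
\begin{itemize}
\item At $u_3=0$, the moment map forces $|u_1|=|u_2|$ and $\bar u_1 i u_1=-\bar u_2 i u_2$. The stabilizer in $S^1\times Sp(1)$ is then $\{\lambda^{2p}=1\}\cong\Z_{2p}$.
\item At $u_1=0$ or $u_2=0$, the same computation gives $\{\lambda^{p+q}=1\}\cong\Z_{p+q}$.
\item The edges are the loci where all $\bar u_a i u_a$ are collinear. The edge on which $\bar u_3 i u_3$ is antiparallel to the other two has group $\Z_{\gcd(p_1+p_3,\,p_2+p_3)}=\Z_{p+q}$. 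The other two edges have trivial group, since $\gcd(2p,p+q)=1$.
\end{itemize}
So the facet labels are $(p+q,1,1)$ and the vertex orders are $(2p,p+q,p+q)$. This is exactly the data of $\C\p^2(2p,p+q,p+q)$, including its $\Z_{p+q}$-divisor $\{z_0=0\}$. Because the vertex orders of a quotient $\C\p^2(a_0,a_1,a_2)/\Gamma$ are $|\Gamma|a_i$ and $\gcd(2p,p+q)=1$, a finite quotient is excluded.

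\textbf{The classification you invoke.} $B$ carries a quaternionic Kähler structure, not a Kähler one, so you cannot appeal to Lerman--Tolman directly. Use instead the orbit-space classification of effective $T^2$-actions on compact $4$-orbifolds (the orbifold version of Orlik--Raymond), or write down an explicit equivariant identification.
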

Consequently, the twistor orbifold of every weighted projective space  of Theorem \ref{weigthedprojectivespace} carries the LT-structure family of Theorem \ref{theo2}.
\section{Conclusion and perspectives.} Our new approach to construct LT-structures using $3$-Sasakian geometry allowed us to describe examples of $6$-dimensional manifolds admitting LT-structures that have been known in the literature, by applying the twist theorem \cite{peres2026sunstructuresquotienttorusactions}. The proof also works in the quasi-regular and orbifold setting, which gives numerous new examples of orbifolds admitting LT-structures. In that perspective, the following questions could be adressed: 
\begin{quest}\label{quest1}
    Can we smooth out the singularities of our $SU(3)$-structures in the quasi-regular and orbifold case? If so, can we preserve some of the torsion classes? 
\end{quest}
 It is also important to note that our results can only be applied for $3$-Saskian manifolds and orbifolds of dimension $7$, because of the requirement of the charge of the twist form. Even though only this case seems relevant for string theory, it would be interesting to know if LT-structures on smooth and non compact manifolds exist in higher dimensions. As far as we know, there are no examples of LT-structures in dimension greater than $6$ that are not direct products of lower dimensional LT-structure manifolds.
 \begin{quest}\label{quest2}
     Can we adopt this framework in higher dimension?
 \end{quest}
A possibility that we have not investigated is using Theorem $C$ of \cite{peres2026sunstructuresquotienttorusactions} where more than one twist form is used. Question \ref{quest1} and Question \ref{quest2} are related as we know examples of spaces admitting such structures with singularities in higher dimension.

\printbibliography[title=References]
\end{document}